\documentclass[a4paper, 11pt]{article} 

\usepackage[left=3cm,top=3cm,right=3cm,bottom=3cm]{geometry}
\usepackage[english]{babel}
\usepackage{microtype}
\usepackage{booktabs}
\usepackage{amsmath} 
\usepackage{amssymb,amsthm} 
\usepackage{floatrow}
\usepackage{graphicx}
\usepackage{subcaption}
\usepackage[mode=buildnew]{standalone}
\usepackage{todonotes}
\usepackage{hyperref}

\newtheorem{rem}{Remark}

\newtheorem{proposition}{Proposition}
\usepackage{tikz}
\usetikzlibrary{arrows.meta, positioning, calc}
\definecolor{mainblue}{RGB}{31, 119, 180}
\definecolor{reinfectionorange}{RGB}{214, 123, 25}

\DeclareFloatFont{scriptsize}{\scriptsize}
\usepackage{float}

\makeatletter
\newcommand\notsotiny{\@setfontsize\notsotiny\@vipt\@viipt}
\makeatother
\usepackage{xargs}   
\usepackage[pdftex,dvipsnames]{xcolor} 
\newcommandx{\agus}[2][1=]{\todo[linecolor=Plum,backgroundcolor=Plum!25,bordercolor=Plum,#1]{#2}}
\newcommandx{\marce}[2][1=]{\todo[linecolor=lime,backgroundcolor=lime!25,bordercolor=lime,#1]{#2}}

\allowdisplaybreaks 

\title{\LARGE \bf
	Mathematical modeling and control of Tuberculosis: Backward and Hopf bifurcations under case-finding and case-holding interventions
}

\author{A. Schaberger, M. Actis,  J. C. Bossio, A. H. González and A. D' Jorge 
	\thanks{ A. Schaberger, A. H. González, and A. D'Jorge are affiliated with the Institute of Technological Development for the Chemical Industry (INTEC), CONICET-UNL, Santa Fe, Argentina (e-mails: andresschaberger21@gmail.com, alejgon@santafe-conicet.gov.ar and agustinadj@santafe-conicet.gov.ar, respectively.)}
	\thanks{M. Actis is affiliated with the Faculty of Chemical Engineering (FIQ) - CONICET-UNL, Santa Fe, Argentina 
		(e-mail: marceactis@gmail.com)}%
	\thanks{J. C. Bossio is affiliated with the National Institute of Respiratory Diseases (INER), Santa Fe, Argentina 
		(e-mail: jcbossio29@gmail.com)}%
}

\begin{document}
	
	\maketitle
	
	\begin{abstract}
		Reinfection and relapse are known to induce backward bifurcation (a subcritical pitchfork bifurcation) in many infectious disease models, particularly tuberculosis. This phenomenon implies the coexistence of multiple equilibria when the basic reproduction number is below one: a stable disease-free equilibrium and two endemic equilibria, one stable and one unstable. As a result, hysteresis may arise, meaning that reducing the reproduction number below one may not eliminate the disease given that the system remains in the basin of attraction of the endemic state.
		
		Another relevant phenomenon, often associated with intervention delays, is the occurrence of supercritical Hopf bifurcations. In this case, the endemic equilibrium loses stability for reproduction numbers above one, leading to sustained oscillations in the form of stable limit cycles. Although less explored in the tuberculosis literature, this behavior has important implications for disease control.
		
		In this work, we analyze a tuberculosis model to investigate these dynamical effects and show that Hopf bifurcation can arise even without delays or saturation effects. The bifurcation analysis focuses on two key public health interventions---case finding and case holding---and their combined effect on disease dynamics. Numerical simulations, based on epidemiological data from Salta, Argentina---a province with high TB burden--- illustrate the challenges these dynamics pose for the design and implementation of public health policies.
	\end{abstract}

\section{Introduction}

\subsection*{The problem of Tuberculosis}

Tuberculosis (TB) is an infectious disease caused by the bacterium \textit{Mycobacterium tuberculosis}. Transmission occurs primarily through inhalation of airborne particles released when an individual with active TB coughs, sneezes, speaks, or otherwise expels respiratory secretions. Although TB is preventable and curable, at least in principle \cite{walzl2011immunological}, it remains the leading cause of death from a single infectious disease worldwide and is among the ten leading causes of death globally \cite{WHOTuberculosis2025}.

Following infection, individuals may enter a latent or an active stage. Latently infected individuals are generally asymptomatic and non-infectious, whereas individuals with active TB may transmit the disease. Approximately 90\% of infected individuals remain in the latent stage, and it is estimated that nearly one-quarter of the world's population carries latent \textit{M. tuberculosis} infection, constituting a large reservoir for future active cases \cite{houben2016global}. Although the duration of latency is highly variable, approximately 5--15\% of infected individuals are expected to develop active TB, with the risk being particularly high during the first 2--5 years following infection. This risk increases substantially in the presence of comorbidities and adverse conditions, including HIV infection, diabetes, COVID-19, kidney failure, malnutrition, overcrowding, and substance abuse \cite{Lawn2011}. Socioeconomic conditions, and poverty in particular, further contribute to TB transmission and disease progression \cite{paicanadian,BoletinN7TBArg}. Malnutrition \cite{lonnroth2010consistent}, alcohol abuse \cite{rehm2009association}, and drug addiction \cite{deiss2009tuberculosis,WHOTuberculosis2024} are additional factors associated with an increased risk of disease reactivation.

For drug-sensitive TB, standard first-line treatment lasts approximately six months and can achieve success rates of up to 95\%, with relapse rates below 5\% within two years after treatment completion. Treatment adherence is therefore essential. Interruptions or inadequate treatment may promote the emergence of drug-resistant strains, which generally require longer, more expensive, and less effective second-line therapies \cite{WHOTuberculosis_DrugResistance}. Moreover, the circulation of resistant strains increases the risk of secondary infections through exogenous reinfection \cite{chiang2005exogenous}, thereby complicating TB diagnosis, treatment, and control.

Despite substantial progress in prevention and treatment, TB remains a major global health challenge affecting populations across regions and age groups. In response, the World Health Organization (WHO) launched the \textit{End TB Strategy} in 2015 \cite{world2015end}, with the goal of ending the global TB epidemic through integrated patient-centered care, supportive policies and health systems, and intensified research and innovation. The elimination of TB is also explicitly included among the targets of the United Nations Sustainable Development Goals (SDGs).

\subsection*{TB modeling and control}

Mathematical models of infectious diseases provide valuable tools for investigating transmission mechanisms, identifying epidemiological thresholds, and assessing the potential impact of public health interventions. Their predictive and explanatory capabilities, however, depend critically on the quality and availability of epidemiological data. Among the most widely used frameworks for TB modeling are compartmental models derived from the classical SEIR structure \cite{kermack1927,brauer2019mathematical}, in which the population is divided into susceptible ($S$), exposed or latent ($E$), infectious ($I$), and removed ($R$) compartments. The resulting dynamics are governed by epidemiologically meaningful parameters, including the transmission rate, the rate of progression from latency to active disease, and the rate of removal from the infectious class.

A wide variety of extensions of this basic framework have been proposed to account for relevant features of TB epidemiology. These include vaccination, early and persistent latency, diagnosed and undiagnosed infections, treatment, comorbidities, fast and slow disease progression, endogenous reinfection (relapse), exogenous reinfection, and the emergence of drug resistance \cite{silva2015optimal}. Despite the substantial differences among these formulations, several qualitative dynamical features recur across TB models. In particular, both backward and Hopf bifurcations have been reported. Backward bifurcations are commonly associated with mechanisms such as reinfection and relapse \cite{wangari2018backward}, whereas Hopf bifurcations may arise in connection with delays in diagnosis or treatment \cite{das2025exploring,zhang2023dynamical}.

For instance, \cite{liu2011global} investigates an SVLIT model, comprising susceptible, vaccinated, latent, infectious, and treated populations, and establishes the occurrence of a forward bifurcation at a basic reproduction number equal to one. The analysis also demonstrates the potential effectiveness of vaccination and treatment under appropriate conditions, although delays are not considered. More recently, \cite{Peter2025model} introduced a TB model incorporating vaccination, treatment failure, and drug resistance. Their results emphasize that reducing the basic reproduction number below one is essential for disease eradication and highlight the role of vaccination and effective treatment in achieving this objective.

Several studies have explicitly incorporated intervention strategies into TB models. In \cite{gao2018optimal}, the SVLIT framework is extended by introducing two control mechanisms: case finding, aimed at identifying latent infections, and case holding, aimed at treating active cases. The resulting optimal control problem seeks to minimize the numbers of latent, infectious, and treated individuals while accounting for the cost of the interventions. The analysis suggests that the simultaneous application of the considered control measures provides the greatest benefit when delays are absent.

Similarly, \cite{silva2016optimal} considers an $SL_1IL_2R$ model in which a diagnostic delay affects the infectious compartment. The authors derive conditions for the occurrence of a forward bifurcation and formulate an optimal control strategy involving early detection ($u_1$) and treatment of persistently latent individuals ($u_2$). Their results indicate that diagnostic delays can substantially reduce the effectiveness of control interventions.

An important feature of TB dynamics is the possibility of \textbf{backward bifurcation}. Several studies \cite{feng2000model,brauer2019mathematical,wangari2018backward} have shown that mechanisms such as reinfection and relapse can lead to the coexistence of multiple equilibria when the basic reproduction number is below one. In this situation, a stable disease-free equilibrium may coexist with two endemic equilibria, typically one unstable and one stable. Consequently, the condition $\mathcal{R}_0<1$, which is sufficient for disease eradication in models exhibiting a forward bifurcation, may no longer be sufficient in the presence of a backward bifurcation.

Other extensions have demonstrated that nontrivial temporal dynamics may emerge from mechanisms such as time delays and nonlinear incidence. For example, \cite{zhang2023dynamical} considers delays in the latent stage and shows that sufficiently large delays can generate oscillatory behavior through \textbf{Hopf bifurcations}. Depending on the system parameters, stable or unstable periodic solutions may arise as the delay crosses a critical threshold. Similar oscillatory or multistable behavior can result from saturation in the infection incidence \cite{das2024exploring}, leading to the formation of so-called endemic bubbles with potentially relevant epidemiological consequences.

\subsection*{Contribution}

In this work, we investigate a TB model that exhibits both backward and Hopf bifurcations in the absence of either time delays or saturation effects. We provide a detailed analysis of the resulting dynamical behavior, with particular emphasis on the bifurcation structure induced by variations in key epidemiological and intervention parameters. A central aspect of the analysis is the explicit distinction between two major components of TB control: \textit{case finding}, associated with the detection of new cases, and \textit{case holding}, associated with the effective treatment and retention of infected individuals under care. We analyze the influence of the corresponding parameters on the equilibrium structure and bifurcation behavior of the model.

Finally, numerical simulations based on epidemiological data from the province of Salta, Argentina, are used to illustrate the theoretical results and to assess the implications of the identified dynamical phenomena for TB control. The results provide insight into how the interaction between case-finding and case-holding interventions may affect disease persistence and the effectiveness of strategies aimed at reducing TB transmission.

\section{Model description}

From a dynamical perspective, TB exhibits particular features due to feedback mechanisms within the infected compartments. Individuals who are already infected—either in the latent state or effectively treated—may be reinfected by a different (possibly resistant) strain, while treated individuals may also relapse without reinfection\footnote{Reinfection and relapse are often referred to as exogenous and endogenous reinfection, respectively.}.

A wide range of compartmental models has been proposed to describe TB dynamics, from simple frameworks focused on stability and bifurcation analysis to more detailed models tailored to specific regions \cite{vesga2022prioritising}. Here, we consider a variation of the model introduced in~\cite{feng2000model} and~\cite{castillo2004dynamical} (Section 4.5), designed to capture the main nonstandard features of TB dynamics\footnote{The difference between model~\eqref{eq:model} and those in~\cite{feng2000model} and~\cite{castillo2004dynamical} is the inclusion of a fast reinfection term, $cq\beta I(t)E(t)$, described next.}:
\begin{eqnarray}\label{eq:model}
		\dot{U}(t) &=& b(N) \!-\! \beta I(t) U(t) \!-\! \mu U(t),\\
		\dot{L}(t) &=& (1-p)\beta I(t) U(t) + c(1-q) \beta I(t) E(t) \!-\! (v + \mu) L(t)-k \beta I(t)L(t), \\
		\dot{I}(t) &=& v L(t) \!+\! p \beta I(t) U(t) \!+\! \rho E(t) +cq\beta I(t)E(t) \!-\! (\phi \!+\! \mu_T \!+\! \mu) I(t) \!+\! k \beta I(t)L(t), \\
		\dot{E}(t) &=& \phi I(t) - c \beta I(t) E(t) - (\rho + \mu) E(t), 
\end{eqnarray}

where $N = U(t)+L(t)+I(t)+E(t)$. The state vector is given by $x(t) = [U(t), L(t), I(t), E(t)]$, and the parameter set can be arranged in the vector $p_r = \{b, \beta, \mu, \mu_T, v, \phi, p, q, \rho, c, k\}$ (see Tables~\ref{tab:sysvar}~and~\ref{tab:syspar}). We assume that disease-induced mortality $\mu_T I(t)$ is negligible compared to the total natural mortality ($\mu_T I(t) \ll \mu N$), which is a typical simplification in this kind of model. Furthermore, we select $b(N):=\mu N$, so that the total population fulfill
\begin{eqnarray*}
    \dot N=\dot U+\dot L+\dot I+\dot E= -\mu_T I,
\end{eqnarray*}
which considering that $N>>I$, for all possible values of $I$, allows us to consider $N$ approximately constant in the time window of analysis. This assumption properly represents the reality of places with a relative small TB prevalence, in which the TB mortality does not affect the population size in the middle term.

Susceptible individuals ($U$) become infected at rate $\beta I U$, entering either the latent class $L$ (with proportion $1-p$) or directly the infectious class $I$ (with proportion $p$). Latent individuals progress to active infection at a rate $v$, but can also be reinfected (rate $k\beta I L$), providing an additional pathway to $I$. Infectious individuals are treated at rate $\phi$, moving to the recovered class $E$.

Recovered individuals can relapse (rate $\rho E$) or be reinfected: fast reinfection leads directly to $I$ (rate $cq\beta I E$), while slow reinfection returns individuals to latency (rate $c(1-q)\beta I E$). The parameter $\beta$ represents the transmission intensity, while $c\beta$ and $k\beta$ account for the reinfection among previously infected individuals. Parameters $p$ and $q$ determine the fractions of individuals who bypass latency after primary infection and reinfection, respectively.
As the system in \cite{feng2000model}, system~\eqref{eq:model} is positive and the sum of the states does not overpass $N$. In other words, the set $\mathbb X:=\{x \in \mathbb R^4: U\geq 0,~L\geq 0,~I\geq 0,~E\geq 0,~U+L+I+E\leq N\}$ is invariant. The system structure is shown in Figure~\ref{fig:basic_scheme}.

\begin{figure}[ht]
	\centering
	\includegraphics[width=0.9\columnwidth]{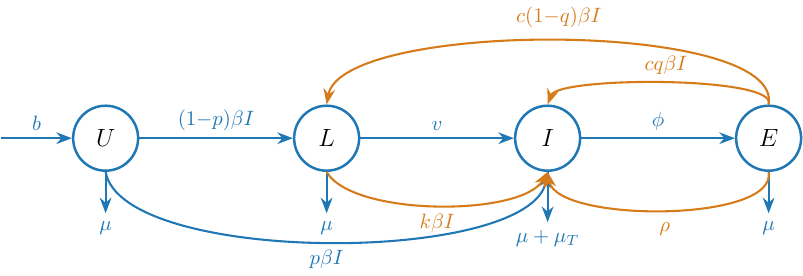}
	\caption{Proposed dynamical model structure. In red are plotted the reinfection and relapse.}
	 \label{fig:basic_scheme}
\end{figure}

\begin{table}[ht]
\small
\caption{Model state variables and their epidemiological interpretation.}
\centering
\begin{tabular}{@{} c p{9cm} c @{}}
\toprule
\textbf{Variable} & \textbf{Epidemiological class} & \textbf{Units} \\
\midrule
$U$ & Susceptible (uninfected) & individuals \\[4pt]
$L$ & Latent (infected but non-infectious; immune-stabilized) & individuals \\[4pt]
$I$ & Active TB (infectious) & individuals \\[4pt]
$E$ & Effectively treated / recovered (clinical cure) & individuals \\
\bottomrule
\end{tabular}
\label{tab:sysvar}
\end{table}

\subsection{Model reduction} \label{sec:modered}

Model~\eqref{eq:model} describes the population dynamics through the compartments and the flows connecting them. Since the total population is assumed to remain constant, i.e.,
\begin{eqnarray}
	N=U(t)+L(t)+I(t)+E(t), \qquad \forall t\geq 0,
\end{eqnarray}
the latent compartment can be eliminated by writing
\begin{eqnarray}
	L(t)=N-U(t)-I(t)-E(t).
\end{eqnarray}
Substituting this expression into the equations for $\dot U$, $\dot I$, and $\dot E$ yields the following third-order model:
\begin{subequations}\label{eq:redmodel}
	\begin{align}
		\dot{U}(t) &= \mu N - \mu U(t) - \beta I(t) U(t) ,\\
		\dot{I}(t) &= vN -v U(t) + (\rho-v) E(t) + (k N \beta - \phi - \mu_T - \mu - v) I(t)  \nonumber \\
		&\quad + (p-k) \beta I(t) U(t) + (cq-k)\beta I(t)E(t) - k \beta I^2(t), \\
		\dot{E}(t) &= \phi I(t) - (\rho + \mu) E(t) - c \beta I(t) E(t),
	\end{align}
\end{subequations}
The eliminated state can subsequently be recovered as
\begin{eqnarray}
L(t)=N-U(t)-I(t)-E(t).
\end{eqnarray}
Accordingly, the state vector of the reduced system is
\begin{eqnarray}
x(t)=[U(t),I(t),E(t)]^\top.
\end{eqnarray}

To investigate the dynamical behavior of the model, a nominal parameter set must first be established. This requires parameter values that are either supported by reliable epidemiological data or justified by previously reported estimates in the literature. In this study, the parameter vector $p_r$ was estimated using epidemiological data from the province of Salta, Argentina, provided by the National Institute of Respiratory Diseases of Argentina (INER) \cite{BoletinN7TBArg}. INER is the national institution responsible for TB surveillance in Argentina and for reporting epidemiological information to the WHO.

The unknown parameters were estimated using a Markov Chain Monte Carlo (MCMC) approach \cite{Hastings1970,gelman2013bayesian}. In particular, the Metropolis--Hastings algorithm was employed to generate samples from the posterior parameter distribution. Starting from an initial parameter vector, candidate values are sequentially proposed and either accepted or rejected according to the corresponding acceptance probability. After convergence, the resulting samples provide an approximation of the target parameter distribution and allow the associated parameter estimates to be obtained.

In the estimation procedure, the parameters $N$, $b$, $\mu$, $\mu_T$, $p$, and $q$ were fixed according to values reported in the literature and available a priori information from INER. The reinfection coefficient from the latent compartment, $k$, was also fixed at $k=0.26$, based on previous studies addressing the role of reinfection in the emergence of backward bifurcations in TB models \cite{wangari2018backward}. This parameter plays a particularly important role in the bifurcation analysis presented below. The remaining parameters were estimated from the available epidemiological data. The resulting parameter values are reported in the last column of Table~\ref{tab:syspar}.

\begin{table}[ht]
\small
\caption{Model parameters: notation, biological interpretation, and estimated values for the Salta, Argentina case study. Obtained from data base (DB), estimated by identification (E), and taken from literature (L).}
\centering
\begin{tabular}{p{1cm} p{3cm} p{4.5cm} p{2cm}}
\toprule
\textbf{Param.} & \textbf{Name and units} & \textbf{Biological interpretation} & \textbf{Value} \\
\midrule
$N$ & Total population $[\footnotesize{\text{ind}}]$ & Constant total population under study & $1{,}424{,}397$~(DB) \\[4pt]
$b$ & Recruitment rate $\left[\frac{\text{ind}}{\text{yr}}\right]$ & Annual inflow of newborns into the susceptible class & $18{,}261$~(DB) \\[4pt]
$\beta$ & TB transmission coefficient $\left[\frac{1}{\text{ind}\cdot\text{yr}}\right]$ & $\beta I U$: rate at which susceptible individuals ($U$) become infected through contact with active TB cases ($I$) & $1.3\!\times\!10^{-5}$~(E)\\[4pt]
$\mu$ & Natural per capita death rate $\left[\frac{1}{\text{yr}}\right]$ & $\mu X$: natural mortality outflow from any compartment $X$ & $1/78$~(DB) \\[4pt]
$\mu_T$ & TB-induced per capita death rate $\left[\frac{1}{\text{yr}}\right]$ & $\mu_T I$: excess mortality due to active TB & $0.1513$~(DB) \\[4pt]
$v$ & Per capita slow-progression rate $\left[\frac{1}{\text{yr}}\right]$ & $v L$: rate at which latent individuals progress to active disease & $1.7\!\times\!10^{-3}$~(E) \\[4pt]
$\phi$ & Per capita treatment rate $\left[\frac{1}{\text{yr}}\right]$ & $\phi I$: rate at which active cases are effectively treated (case holding) & $2.88$~(E) \\[4pt]
$p$ & Fast-progression proportion $[-]$ & Fraction of newly infected individuals who progress rapidly to active disease; $(1-p)$ enter the latent class & $0.05$~(L~\cite{wangari2018backward}) \\[4pt]
$\rho$ & Per capita relapse rate $\left[\frac{1}{\text{yr}}\right]$ & $\rho E$: rate at which treated individuals relapse into the active class & $2.4\!\times\!10^{-4}$~(E) \\[4pt]
$c$ & Exogenous reinfection coefficient $[-]$ & Scales the susceptibility of treated individuals to reinfection; $c\beta I E$: reinfection flow from the treated class & $0.084$~(E) \\[4pt]
$q$ & Fast-reinfection proportion $[-]$ & Fraction of reinfected treated individuals who progress rapidly; $(1-q)$ become latent & $0.05$~(L~\cite{wangari2018backward}) \\[4pt]
$k$ & Latent reinfection coefficient $[-]$ & $k\beta I L$: rate at which latent individuals become active through reinfection by a new strain & $0.26$~(L~\cite{wangari2018backward}) \\
\bottomrule
\end{tabular}
\label{tab:syspar}
\end{table}

\subsection{Equilibria}\label{sec:equilibria}

To characterize the system, its equilibria and their stability must be determined. By zeroing the equations in \eqref{eq:redmodel}, we obtain
\begin{subequations}\label{eq:equil1}
	\begin{align}
		\mu N - \mu U - \beta I_e U_e &= 0 ,\\
		vN -v U_e + (\rho-v) E_e+ \delta I_e + (p-k) \beta I_e U_e + (cq-k)\beta I_e E_e - k \beta I_e^2 &= 0 \\
	    \phi I - (\rho + \mu) E_e - c \beta I_e E_e &= 0.
	\end{align}
\end{subequations}
From (\ref{eq:equil1}a) and (\ref{eq:equil1}c), we can express $U_e$ and $E_e$ as functions of $I_e$,
\begin{eqnarray}\label{eq:Ue}
    U_e = \frac{\mu N}{\mu+\beta I_e},
\end{eqnarray}
\begin{eqnarray}\label{eq:Ee}
    E_e = \frac{\phi I_e}{\mu+\rho+ c\beta I_e},
\end{eqnarray}
which, replaced in (\ref{eq:equil1}b), gives:
\begin{eqnarray}\label{eq:Ie}
    vN - \frac{v \mu N}{\mu+\beta I_e} + \frac{(\rho-v)\phi I_e}{\mu+\rho+ c\beta I_e} + \delta I_e + \frac{\mu N (p-k) \beta I_e}{\mu+\beta I_e} + \frac{(cq-k)\beta \phi I_e^2}{\mu+\rho+ c\beta I_e} - k \beta I_e^2 = 0.
\end{eqnarray}
This later equation can be simplified as
\begin{eqnarray}\label{eq:Ie1}
     w_1 I_e^4 + w_2 I_e^3 + w_3 I_e^2 + w_4 I_e= 0,
\end{eqnarray}
where coefficients $w_1,w_2,w_3,w_4$ read
\begin{align} \label{eq:coeficientes_w}
    w_1&= -ck\beta^3, \nonumber \\
    w_2&=N\beta ^3ck-\beta ^2c\mu_T-\beta ^2c\phi -\beta ^2k\mu -\beta ^2cv-\beta ^2k\phi -\beta ^2k\rho -\beta ^2\,c\,\mu \nonumber\\
    &\quad-\beta ^2ck\mu +\beta ^2cq\phi, \nonumber \\
    w_3&= N\,\beta ^2\,k\,\mu -\beta \,\mu \,\mu_T-\beta \,\mu \,\phi -\beta \,\mu \,\rho -\beta \,\mu_T\,\rho -\beta \,\mu \,v-\beta \,\phi \,v-\beta \,\rho \,v-\beta \,c\,\mu^2 \nonumber \\
    &\quad-\beta \,k\,\mu ^2-\beta \,c\,\mu \,\phi -\beta \,c\,\mu \,v-\beta \,k\,\mu \,\phi -\beta \,k\,\mu \,\rho -\beta \,\mu ^2+N\,\beta ^2\,c\,v+N\,\beta ^2\,k\,\rho  \nonumber \\
    &\quad -\beta \,c\,\mu \,\mu_T+ \beta \,c\,\mu \,\phi \,q+N\,\beta ^2\,c\,\mu \,p, \nonumber \\
    w_4&=N\,\beta \,\mu ^2\,p-\mu ^2\,\phi -\mu ^2\,\rho -\mu ^2\,v-\mu ^3-\mu \,\mu_T\,\rho -\mu \,\phi \,v-\mu \,\rho \,v-\mu ^2\,\mu_T+N\,\beta \,\mu \,v \nonumber \\
    &\quad +N\,\beta \,\rho \,v+N\,\beta \,\mu \,p\,\rho.
\end{align}

Now, the four roots of the quartic polynomial \eqref{eq:Ie1}, $I_e^{(i)}$, with $i\in \{1,2,3,4\}$, together with equations \eqref{eq:Ue} and \eqref{eq:Ee}, are candidates to form the equilibria of the system:
\begin{eqnarray*}
x_{e}^{(i)}=[U_{e}^{(i)},I_{e}^{(i)},E_{e}^{(i)}].
\end{eqnarray*}

However, for a triplet $[U_{e}^{(i)},I_{e}^{(i)},E_{e}^{(i)}]$ to be an equilibrium with epidemiological meaning, each component must be non-negative and real (nnr); i.e., it makes no sense a negative or complex number of individuals. By \eqref{eq:Ue} and \eqref{eq:Ee} (and the fact that all the parameters are positive), each component of the triplet $[U_{e}^{(i)},I_{e}^{(i)},E_{e}^{(i)}]$ will be nnr if and only if the corresponding component $I_{e}^{(i)}$ is nnr.

The first equilibrium candidate is the disease-free equilibrium,
\begin{eqnarray}
DFE:=x_{e}^{(1)}=[U_{e}^{(1)},I_{e}^{(1)},E_{e}^{(1)}]=[N,0,0],
\end{eqnarray}
which is obtained from the root $I_e^{(1)}=0$, and has non-negative and real components for all positive parameter values. The other three candidates, denoted without loss of generality as endemic equilibrium candidates, are given by
\begin{eqnarray*}
EE_1:= x_{e}^{(2)}=[U_{e}^{(2)},I_{e}^{(2)},E_{e}^{(2)}],\\
EE_2:= x_{e}^{(3)}=[U_{e}^{(3)},I_{e}^{(3)},E_{e}^{(3)}],\\
EE_3:= x_{e}^{(4)}=[U_{e}^{(4)},I_{e}^{(4)},E_{e}^{(4)}].
\end{eqnarray*}

Due to the large number of parameters, analyzing the candidates $EE_1$, $EE_2$, and $EE_3$ for all possible combinations of parameters is not feasible. Therefore, we focus on studying their behavior as the critical parameters associated with external interventions vary within reasonable ranges, while keeping the remaining parameters fixed according to reliable datasets or well-justified assumptions. The formal analysis of the behavior (existence and stability) of the $DFE$, $EE_1$, $EE_2$, and $EE_3$ as critical parameters vary is carried out through bifurcation analysis in Section~\ref{sec:bifu}.

\subsection{Stability}

The stability of the equilibria is determined by evaluating the Jacobian matrix of System~\eqref{eq:model},
\begin{eqnarray}\label{eq:jacob3D}
    J(x,p_r)\!\!=\!\!\begin{bmatrix}
    -\beta I \!\! - \!\! \mu & -\beta U &\!\! 0\\
     (p \!\!-\!\! k)\beta I \!\!-\!\!v& \delta \!\!+\!\! (p\!\!-\!\!k)\beta U \!\!+ \!\!(cq\!\!-\!\!k)\beta E \!\!-\!\!2k\beta I& (\rho\!\!-\!\!v) \!\!+\!\! (cq\!\!-\!\!k)\beta I\\
     0 \!\!&\!\! \phi\!\!-\!\!c \beta E  & -c\beta I\!\!-\!\!\rho\!\!-\!\!\mu
    \end{bmatrix},
\end{eqnarray}
at $DFE$, $EE_1$, $EE_2$, and $EE_3$. For each equilibrium, three eigenvalues are obtained, which determine its stability. Let $\lambda_j^{(i)}$ denote the $j$-th eigenvalue of $J(x_e^{(i)}, p_r)$ corresponding to the $i$-th equilibrium, where $j \in \{1,2,3\}$ and $i \in \{1,2,3,4\}$.

If all eigenvalues are negative real numbers, then $x_e^{(i)}$ is a stable node (stable without oscillations). If at least one eigenvalue is positive and real, then $x_e^{(i)}$ is unstable (without oscillations). If the eigenvalues are complex with negative real parts, then $x_e^{(i)}$ is a stable focus (stable with oscillations). Finally, if at least one eigenvalue has a positive real part and a nonzero imaginary part, then $x_e^{(i)}$ is an unstable focus (unstable with oscillations).

To numerically determine the equilibrium type and stability, we define two quantities:
(i) the maximal real part of the eigenvalues of the Jacobian evaluated at each equilibrium:
\begin{eqnarray}\label{eq:max_re_eigval}
\lambda^{(i)}_{\max} := \max_{j \in \mathbb I_3} (\mathrm{Re}(\lambda^{(i)}_j)), \quad i\in \mathbb I_{4},
\end{eqnarray}
and (ii) the imaginary part of the eigenvalues of the Jacobian evaluated at each equilibrium:
\begin{eqnarray}\label{eq:im_part}
\lambda^{(i)}_{j,\mathrm{Im}} := \mathrm{Im}(\lambda^{(i)}_j), \quad j\in \mathbb I_{3}, \quad i\in \mathbb I_{4}.
\end{eqnarray}
In this way, for the $i$-th equilibrium, if $\lambda^{(i)}_{\max}<0$, then $x_e^{(i)}$ is either a stable node or a stable focus. If all $\lambda^{(i)}_{j,\mathrm{Im}}$, $j \in \mathbb I_{3}$, are zero, then $x_e^{(i)}$ is a stable node; otherwise, if at least one of them is nonzero, it is a stable focus.
Conversely, if $\lambda^{(i)}_{\max}>0$, then $x_e^{(i)}$ is either an unstable node or an unstable focus\footnote{The case $\lambda^{(i)}_{\max}=0$ means that the equilibrium is non-hyperbolic, and linearization cannot be used to determine its stability. In any case, we are interested in how and when $\lambda^{(i)}_{\max}$ changes sign.}. If all $\lambda^{(i)}_{j,\mathrm{Im}}$, $j \in \mathbb I_{3}$, are zero, then $x_e^{(i)}$ is an unstable node; otherwise, it is an unstable focus.

Next, in Section~\ref{sec:bifu}, once a parameter estimation has been performed and the parameters associated with external interventions have been determined, the aforementioned eigenvalues, as well as the two quantities defined above, will be considered as functions of those parameters. Then, the existence, type, and stability of each equilibrium $x_{e}^{(1)}$, $x_{e}^{(2)}$, $x_{e}^{(3)}$, and $x_{e}^{(4)}$ will be analyzed in order to show the possible emergence of bifurcations.


\subsection{External interventions}\label{sec:extinterv}

A standard tool in epidemiological modeling is the \textit{basic reproduction number} $R_0$, defined as the average number of secondary infections produced by a single infectious individual in an entirely susceptible population. It is typically used to gauge the force of infection in a given scenario and can be computed via the next generation matrix method \cite{diekmann2010construction}. For our model, it takes the form
\begin{eqnarray}\label{eq:R0}
	R_0 (p_r) = \frac{p \beta b}{(\mu+v)(\mu+\mu_T+\phi)} + \frac{v \beta b}{\mu(\mu+v)(\mu+\mu_T+\phi)} ,
\end{eqnarray}
with nominal numerical value $R_0^{nom}=0.9647$ (using the parameters in Table~\ref{tab:syspar}).

In most epidemiological models, $R_0$ alone determines whether the infection persists ($R_0>1$) or dies out ($R_0<1$) \cite{castillo2004dynamical}. TB models, however, do not generally share this property: reinfection and relapse can give rise to a \textit{backward bifurcation} (a type of subcritical pitchfork bifurcation), through which the infection persists even when $R_0<1$. This behavior has been documented in several studies \cite{castillo2004dynamical,wangari2018backward,gerberry2016practical} and, as we show in Section~\ref{sec:bifu}, also arises in our nominal model~\eqref{eq:model}.

From a control standpoint, we are interested in how a health system can intervene to reduce prevalence $I$ to undetectable levels. Such interventions fall into two broad categories: (i) identification of cases together with chemoprophylaxis, aimed at preventing new infections, and (ii) treatment and adherence, aimed at accelerating the clinical cure of infected individuals \cite{blower1996control}. These are commonly referred to, respectively, as \textit{case finding} (identification and prophylaxis of latently infected individuals) and \textit{case holding} (treatment and adherence of individuals with active TB) \cite{gao2018optimal}. In our model, case finding is governed by the transmission coefficient $\beta$, and case holding by the treatment effectiveness rate per capita $\phi$.

This distinction motivates a departure from common practice. Rather than using $R_0$ as a single meta-parameter for control purposes --- as is customary, following conventions from other infectious diseases --- we propose working directly with $\beta$ and $\phi$. Two observations support this choice. First, $R_0$ (equation~\eqref{eq:R0}) collapses the distinct effects of $\beta$ and $\phi$ into a single number, which limits the ways in which the system can be steered or controlled. Second, because endemic persistence can occur even when $R_0<1$, driving the system below the threshold $R_0=1$ is not, by itself, an effective control objective.

\begin{rem}
	Treating $\beta$ and $\phi$ as separate manipulated variables lets the health system decide how to distribute effort between case finding and case holding, for instance by reducing intervention on one front to reinforce the other.
\end{rem}

Under this framework, System~\eqref{eq:redmodel} is regarded as the control system, with $\beta$ and $\phi$ acting as manipulated variables constrained to a set that is both reasonable and epidemiologically meaningful:
\[
\Omega := \{ (\beta,\phi) \in \mathbb{R}_{\geq 0}^2 : 0.5\times 10^{-5} \leq \beta \leq 2.5\times 10^{-5},\; 1.5 \leq \phi \leq 4 \},
\]
expressed in their corresponding units.

Because parameter estimation relied on data from a system that was already under intervention --- i.e., one in which the health system was actively reducing TB transmission and reinforcing treatment adherence within its available budget --- the nominal values of $\beta$ and $\phi$ lie in the interior of $\Omega$. The bounds of $\Omega$ were chosen so that the reproduction number ranges from $0.27$ to $3.44$ (against a nominal value of $R_0^{nom}=0.9647$), consistent with reported TB values worldwide \cite{ma2018quantifying}. With $\Omega$ defined, we next perform a bifurcation analysis to examine how the prevalence equilibria change as $\beta$ and $\phi$ vary over this set.

\section{Bifurcation Analysis}\label{sec:bifu}

\subsection{Existence of equilibria}\label{sec:existence}

Given the nominal fixed parameters obtained in Section~\ref{sec:modered} and the intervention-parameter set $\Omega$ introduced in Section~\ref{sec:extinterv}, we now determine which roots of equation~\eqref{eq:Ie1} correspond to meaningful equilibria $x_e^{(i)} = [U_e^{(i)}, I_e^{(i)}, E_e^{(i)}]$ --- that is, for which $(\beta,\phi)\in\Omega$ the roots $I_e^{(i)}$, $i \in \{1,2,3,4\}$, are non-negative and real (nnr). To lighten the notation, let $\tilde p_r$ denote the fixed parameters other than $\beta$ and $\phi$.

The root $I_e^{(1)}(\beta,\phi,\tilde p_r)= 0$ is trivially nnr throughout $\Omega$, so the disease-free equilibrium
\[
DFE=[U_e^{(1)}, I_e^{(1)}, E_e^{(1)}]=[N,0,0]
\]
exists for all $(\beta,\phi)\in\Omega$. The remaining roots, $I_e^{(2)}(\beta,\phi,\tilde p_r)$, $I_e^{(3)}(\beta,\phi,\tilde p_r)$, and $I_e^{(4)}(\beta,\phi,\tilde p_r)$, are nonzero by construction (since $w_4\neq 0$ in equation~\eqref{eq:Ie1}) and define the endemic equilibria $EE_1$, $EE_2$, and $EE_3$, respectively. These roots satisfy the cubic
\begin{eqnarray}\label{eq:Ie1_cubico}
	w_1 I_e^3 + w_2 I_e^2 + w_3 I_e + w_4 = 0,
\end{eqnarray}
whose roots are either all real, or one real and a complex-conjugate pair. Which of these two cases occurs is determined by the sign of the discriminant
\begin{eqnarray}\label{eq:det_exists}
	\Delta(\beta,\phi,\tilde p_r)
	&=&
	18 w_1 w_2 w_3 w_4
	- 4 w_2^3 w_4
	+ (w_2 w_3)^2
	- 4 w_1 w_3^3
	- 27 w_1^2 w_4^2,
\end{eqnarray}
which can in turn be written in terms of $\beta$, $\phi$, and $\tilde p_r$ as
\begin{eqnarray}\label{eq:delta0}
	\Delta(\beta,\phi,\tilde p_r)
	&=&
	a_4(\tilde p_r)\beta^6\phi^4
	+a_3(\tilde p_r)\beta^5\phi^3
	+a_2(\tilde p_r)\beta^4\phi^2
	+a_1(\tilde p_r)\beta^3\phi
	+a_0(\tilde p_r),
\end{eqnarray}
with coefficients $a_0(\tilde p_r),\ldots,a_4(\tilde p_r)$ given in the Appendix. Wherever the curve $\Delta(\beta,\phi,\tilde p_r)=0$ (Figure~\ref{fig:twoplots_a}, solid line) intersects $\Omega$, it splits the parameter set into two regions. Above the curve, where $\Delta<0$, the cubic has one real root and a complex-conjugate pair, so at most two equilibria can exist: the $DFE$ and, without loss of generality, the endemic equilibrium $EE_3$. Below the curve, where $\Delta>0$, the cubic has three distinct real roots, allowing for up to four equilibria: $DFE$, $EE_1$, $EE_2$, and $EE_3$\footnote{At $\Delta=0$ two real roots coincide, marking the transition from a complex-conjugate pair to two distinct real roots.}. In each region, the non-negativity of $EE_1$, $EE_2$, and $EE_3$ must be checked separately.

To determine the sign of the real roots, we use Descartes' rule of signs. For the cubic~\eqref{eq:Ie1_cubico} with $w_1\neq 0$, the number of negative real roots (with multiplicity) equals $N_s$ or $N_s-2$, where $N_s\in\{0,1,2,3\}$ is the number of sign changes in the sequence
\[
\{-w_1,\,w_2,\,-w_3,\,w_4\}.
\]
Similarly, the number of positive real roots equals $P_s$ or $P_s-2$, where $P_s$ counts the sign changes in $\{w_1,\,w_2,\,w_3,\,w_4\}$.

We use this to partition $\Omega$ according to the signs of $w_1$, $w_2$, $w_3$, and $w_4$ (their explicit expressions are given in \eqref{eq:coeficientes_w}). Over all of $\Omega$, $w_1(\beta,\phi,\tilde p_r)$ is negative. Figure~\ref{fig:twoplots_a} shows the curves $w_2=0$ (dash-dotted), $w_3=0$ (dashed), and $w_4=0$ (dotted), which further divide $\Omega$ into regions of constant sign; evaluating a test point in each region shows that $w_2$, $w_3$, and $w_4$ are negative above their respective curves and positive below them. We now examine each resulting region in turn.

\textbf{Region $\Omega_1$} (above $w_3=0$): here $w_2<0$, $w_3<0$, $w_4<0$, so the sequence $\{-w_1,\,w_2,\,-w_3,\,w_4\}$ changes sign three times, giving $N_s=3$ and hence one or three negative real roots. Since $\Delta<0$ in $\Omega_1$, only one root is real, and it must therefore be negative. Thus $DFE$ is the only equilibrium in $\Omega_1$.

\textbf{Region $\Omega_2$} (below $w_3=0$, above $\Delta=0$): here $w_2<0$, $w_3>0$, $w_4<0$, and $\Delta<0$. The sequence $\{-w_1,\,w_2,\,-w_3,\,w_4\}$ now changes sign only once, so the cubic has exactly one negative real root. As in $\Omega_1$, $DFE$ is the only equilibrium.

\textbf{Region $\Omega_3$} (above $w_4=0$, below $\Delta=0$): here $w_2<0$, $w_3>0$, $w_4<0$, and $\Delta>0$. The sequence $\{-w_1,\,w_2,\,-w_3,\,w_4\}$ again changes sign once, giving exactly one negative real root; since $\Delta>0$ guarantees three distinct real roots, the other two roots must be positive. In particular\footnote{The roots $I_e^{(2)}$ and $I_e^{(3)}$ are positive in this region by construction; below the curve $\Delta=0$ they satisfy $I_e^{(2)}\neq I_e^{(3)}$, and they coincide on the curve itself. We take $I_e^{(2)}$ to be the smaller positive root.},
\[
0=I_e^{(1)}<I_e^{(2)}<I_e^{(3)}.
\]
So $\Omega_3$ admits three meaningful equilibria: $DFE$, $EE_1$, and $EE_2$, while $EE_3$ has negative components.

\textbf{Region $\Omega_4$} (below $w_4=0$, above $w_2=0$): here $w_2<0$, $w_3>0$, $w_4>0$, and $\Delta>0$. The sequence $\{-w_1,\,w_2,\,-w_3,\,w_4\}$ now changes sign twice, leaving an ambiguous count of two or zero negative roots. We resolve this by counting positive roots instead, applying Descartes' rule directly to $\{w_1,\,w_2,\,w_3,\,w_4\}$. Since $w_1<0$, $w_2<0$, $w_3>0$, and $w_4>0$, this sequence has a single sign change, so there is exactly one positive real root. Because $\Delta>0$ ensures three distinct real roots and $w_4\neq 0$ rules out a zero root, the remaining two roots must be negative. Hence only $EE_2$ (the positive root) is meaningful, while $EE_1$ and $EE_3$ have negative components: $\Omega_4$ admits two equilibria, $DFE$ and $EE_2$.

\textbf{Region $\Omega_5$} (below $w_2=0$): here $w_2>0$, $w_3>0$, $w_4>0$, and $\Delta>0$. The same reasoning applies: since $w_1<0$ while $w_2,w_3,w_4>0$, the sequence $\{w_1,\,w_2,\,w_3,\,w_4\}$ changes sign once, giving exactly one positive real root, with the remaining two roots negative (as $\Delta>0$ and $w_4\neq 0$). As in $\Omega_4$, $\Omega_5$ admits two equilibria: $DFE$ and $EE_2$.

Figure~\ref{fig:twoplots_a} collects the curves $w_2=0$, $w_3=0$, $w_4=0$, and $\Delta=0$ together with the regions $\Omega_i$, $i=1,\dots,5$, while Figure~\ref{fig:twoplots_b} summarizes the corresponding regions of coexistence of one, two, and three epidemiologically meaningful equilibria.

\begin{figure}[htbp]
	\centering
	\begin{subfigure}{0.48\textwidth}
		\centering
		\includegraphics[width=\linewidth]{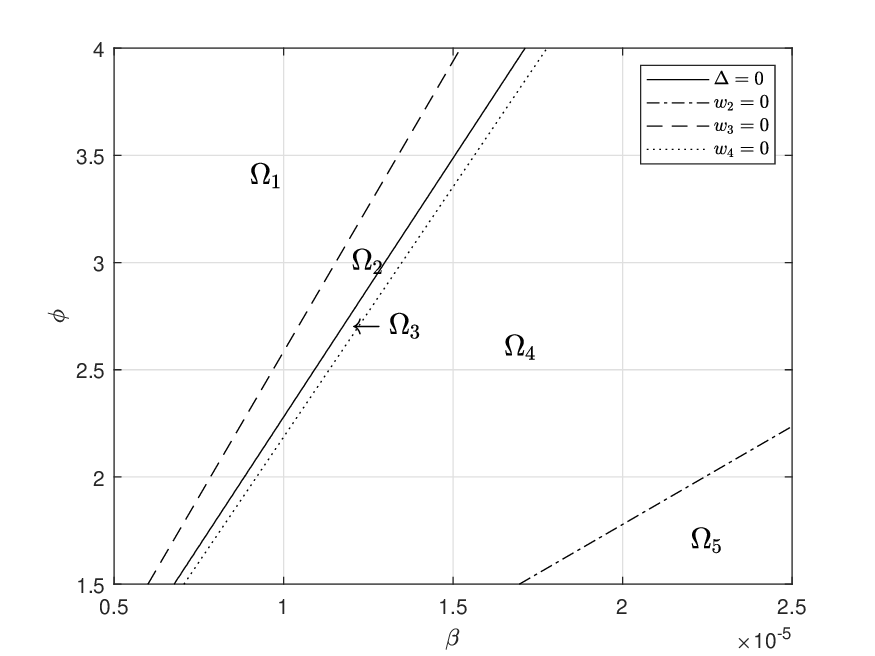}
		\caption{}
		\label{fig:twoplots_a}
	\end{subfigure}
	\hfill
	\begin{subfigure}{0.48\textwidth}
		\centering
		\includegraphics[width=\linewidth]{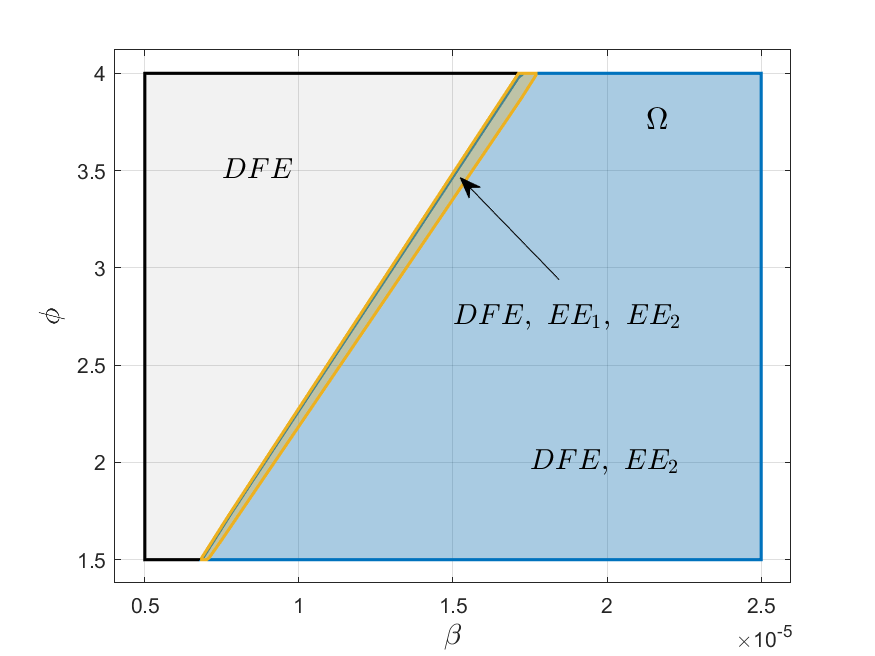}
		\caption{}
		\label{fig:twoplots_b}
	\end{subfigure}
	\caption{Existence. (a) Curves $w_2=0$ (point-dashed line), $w_3=0$ (dashed line), $w_4=0$ (dotted line), $\Delta=0$ (solid lines), and subsets $\Omega_1$, $\Omega_2$, $\Omega_3$, $\Omega_4$, and $\Omega_5$, in $\Omega$. (b) Regions in which $I_e^{(1)}$, $I_e^{(2)}$, and $I_e^{(3)}$ are non-negative and real in $\Omega$, leading to the existence of epidemiological equilibria $x_{e}^{(1)}=DFE$, $x_{e}^{(2)}=EE_1$, and $x_{e}^{(3)}=EE_2$, for System~\eqref{eq:redmodel}. There is a region where only $x_{e}^{(1)}=DFE$ is nnr, given by $\Omega_1 \cup \Omega_2$ (gray set), a region where $x_{e}^{(1)}=DFE$, $x_{e}^{(2)}=EE_1$ and $x_{e}^{(3)}=EE_2$ are nnr, given by $\Omega_3$ (yellow set), and a region where $x_{e}^{(1)}=DFE$ and $x_{e}^{(3)}=EE_2$ are nnr, given by $\Omega_4 \cup \Omega_5$ (blue set). $I_e^{(4)}$, and so $x_{e}^{(4)}=EE_3$, is never nnr in $\Omega$.}
	\label{fig:twoplots}
\end{figure}

\subsection{Stability of equilibria} \label{sec:stabilityeq}

To develop a formal bifurcation study, we next analyze the stability of the equilibria. To determine the stability of $DFE$, which exists for all $(\beta,\phi)\in \Omega$, we study the three eigenvalues of the Jacobian matrix $J(DFE,\beta,\phi,\tilde{p}_r)$. We see that they are all real in $\Omega$, but the maximal one, $\lambda^{(1)}_{\max}$, has a different sign in different regions (i.e., $DFE$ is a stable or an unstable node, depending on the values of $(\beta,\phi)\in \Omega$).
	
To formally determine the stability threshold of the $DFE$ in $\Omega$, we establish the curve where $\lambda^{(1)}_{\max}$ changes sign,
\begin{eqnarray}\label{eq:lambmax}
		\lambda^{(1)}_{\max}(\beta,\phi,\tilde{p}_r)=b_1(\tilde{p}_r) + b_2(\tilde{p}_r) \beta + b_3(\tilde{p}_r) \phi=0,
\end{eqnarray}
with $b_1(\tilde{p}_r):=-(\rho+\mu)\mu(\mu+\mu_T+v)$, $b_2(\tilde{p}_r):=N(\mu+\rho)(v+p\mu)$, and $b_3(\tilde{p}_r) :=\mu(\mu+v)$, while ensuring that
\begin{equation}\label{eq:DFEstabil}
		\lambda^{(1)}_{1,\mathrm{Im}}(\beta,\phi,\tilde{p}_r)
		=\lambda^{(1)}_{2,\mathrm{Im}}(\beta,\phi,\tilde{p}_r)
		=\lambda^{(1)}_{3,\mathrm{Im}}(\beta,\phi,\tilde{p}_r)
		=0,
\end{equation}
for all $(\beta,\phi)\in \Omega$. The curve~\eqref{eq:lambmax} coincides with the curve $w_4=0$, and divides the parameter space into two subsets: one where the $DFE$ is a stable node (above the curve) and one where it is an unstable node (below the curve). Figure~\ref{fig:threeplots_a} illustrates these regions, in green and red, respectively.
\begin{figure}[htbp]
\centering
    \begin{subfigure}{0.49\textwidth}
        \centering
        \includegraphics[width=\linewidth]{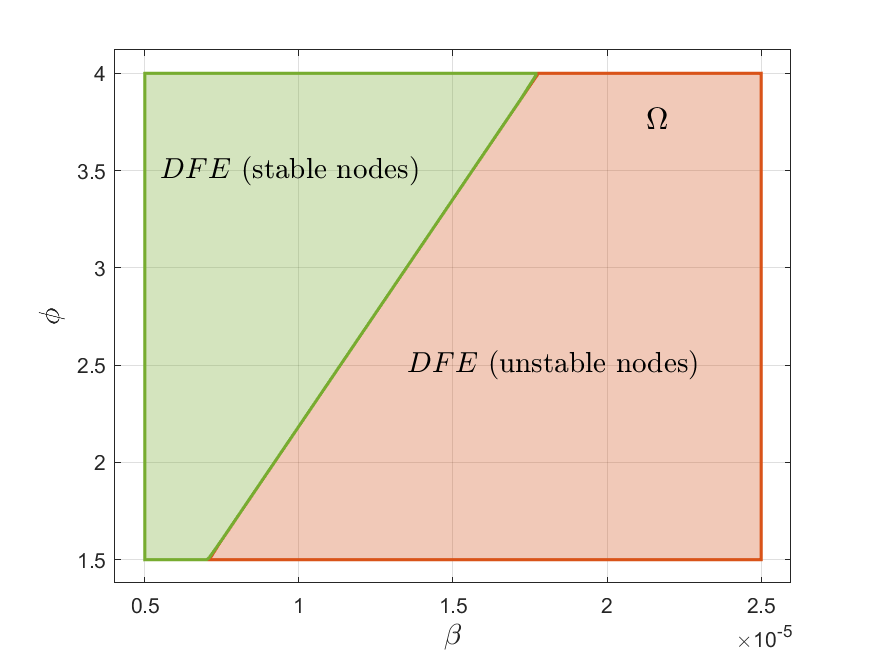}
        \caption{}
        \label{fig:threeplots_a}
    \end{subfigure}
    \hfill
    \begin{subfigure}{0.49\textwidth}
        \centering
        \includegraphics[width=\linewidth]{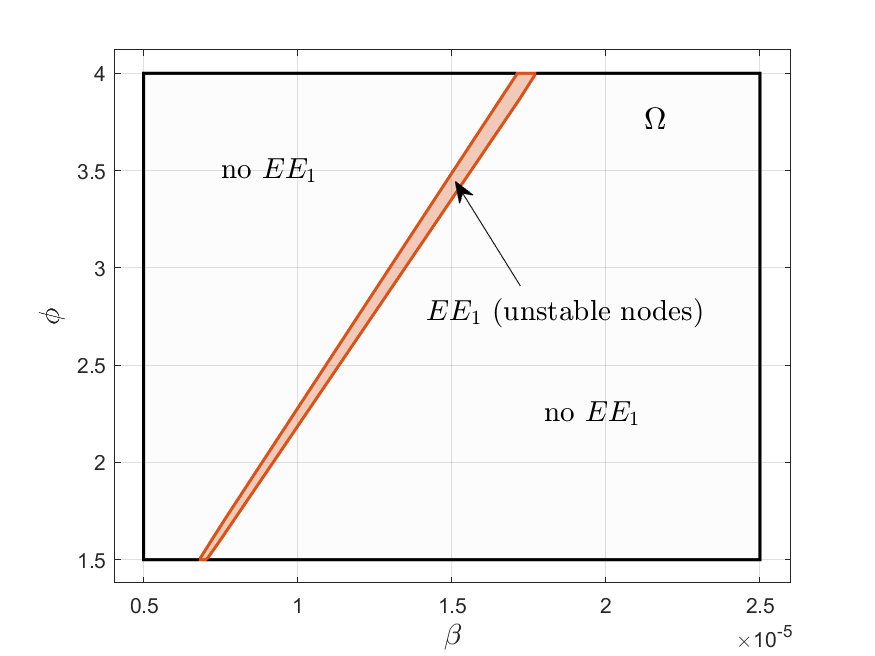}
        \caption{}
        \label{fig:threeplots_b}
    \end{subfigure}
    \\
    \begin{subfigure}{0.49\textwidth}
        \centering
        \includegraphics[width=\linewidth]{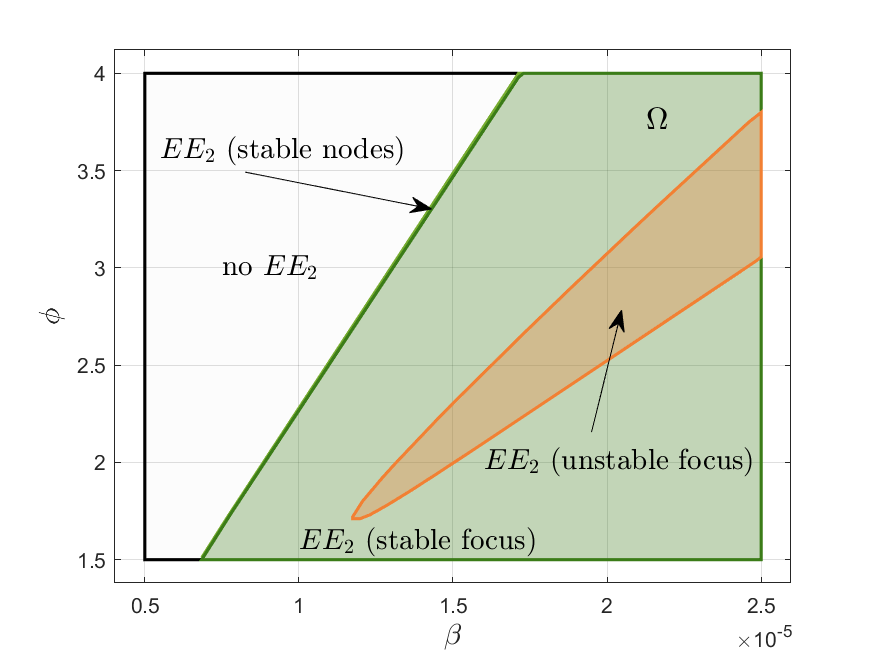}
        \caption{}
        \label{fig:threeplots_c}
    \end{subfigure}
    \caption{Stability. (a) Set $\Omega$, and the partitions in which the DFE is a stable node (light green subset) and an unstable node (light red subset). (b) Set $\Omega$, and the region where $EE_1$ exists and is an unstable node (light red set). The lower boundary coincides with that of the region where the DFE is stable. (c) Set $\Omega$, and the set where $EE_2$ exists and are stable nodes (dark green segment), stable focus (light green set), and unstable focus (dark red set).}
    \label{fig:threeplots}
\end{figure}
	
The equilibrium $EE_1$ exists only in a small range of values of $\beta$ and $\phi$ (given by the subset $\Omega_3$, as shown in Section~\ref{sec:existence}), and it is always an unstable node. This result is determined by analyzing the eigenvalues of the Jacobian $J(EE_1,\beta,\phi,\tilde{p}_r)$ and checking the conditions
\begin{equation}
	\lambda^{(2)}_{\max}(\beta,\phi,\tilde{p}_r) \geq 0,
\end{equation}
and
	\begin{equation}
		\lambda^{(2)}_{1,\mathrm{Im}}(\beta,\phi,\tilde{p}_r)
		=\lambda^{(2)}_{2,\mathrm{Im}}(\beta,\phi,\tilde{p}_r)
		=\lambda^{(2)}_{3,\mathrm{Im}}(\beta,\phi,\tilde{p}_r)
		=0.
	\end{equation}
Figure~\ref{fig:threeplots_b} shows the region where $EE_1$ exists and is an unstable node, in red. Its upper boundary is given by the curve $\Delta=0$, while the lower boundary coincides is given by $w_4=0$.
	
Finally, the equilibrium $EE_2$, which exists below the curve $\Delta=0$ (i.e., for large values of $\beta$ and small values of $\phi$) as shown in Section~\ref{sec:existence}, undergoes a sequence of stability changes as $\beta$ increases and $\phi$ decreases: it is first a stable node, then a stable focus, then an unstable focus, and finally a stable focus again.
	
The condition for $EE_2$ to be a stable node is given by the eigenvalues of the Jacobian $J(EE_2,\beta,\phi,\tilde{p}_r)$, i.e.,
\begin{equation}
	\lambda^{(3)}_{\max}(\beta,\phi,\tilde{p}_r) \leq 0,
\end{equation}
together with
\begin{equation}
	\lambda^{(3)}_{1,\mathrm{Im}}(\beta,\phi,\tilde{p}_r)
	=\lambda^{(3)}_{2,\mathrm{Im}}(\beta,\phi,\tilde{p}_r)
	=\lambda^{(3)}_{3,\mathrm{Im}}(\beta,\phi,\tilde{p}_r)
	=0.
\end{equation}
This defines a narrow region in $\Omega$, depicted in dark green in Figure~\ref{fig:threeplots_c}. Similarly, the condition for $EE_2$ to be a stable focus is given by
\begin{equation}
	\lambda^{(3)}_{\max}(\beta,\phi,\tilde{p}_r) \leq 0,
\end{equation}
provided that at least one of the following holds:
\begin{equation}
	\lambda^{(3)}_{i,\mathrm{Im}}(\beta,\phi,\tilde{p}_r)\neq 0,
	\quad i=1,2,3.
\end{equation}
This region is non-convex and is shown in light green in Figure~\ref{fig:threeplots_c}. Finally, the equilibrium $EE_2$ is an unstable focus when
\begin{equation}
	\lambda^{(3)}_{\max}(\beta,\phi,\tilde{p}_r) > 0,
\end{equation}
and at least one eigenvalue has a nonzero imaginary part:
\begin{equation}\label{eq:EE2unsfocus}
	\lambda^{(3)}_{i,\mathrm{Im}}(\beta,\phi,\tilde{p}_r)\neq 0,
	\quad i=1,2,3.
\end{equation}
This region is depicted in red in Figure~\ref{fig:threeplots_c}.

\begin{rem}
    The explicit expressions of conditions~\eqref{eq:DFEstabil}--\eqref{eq:EE2unsfocus} in terms of the system parameters are excessively cumbersome. Therefore, we present only the general conditions and verify them numerically.
\end{rem}

\subsubsection{A first picture of the three prevalence equilibria existence and stability under key parameters variations}

The proper bifurcation plot, which we leave for the final subsection, consists in a 3D plot showing the $I$-component of the three equilibria as a function of $\beta$ and $\phi$. However, it is far from trivial to draw conclusions of such a figure without a previous understanding of the system's equilibrium behavior when varying each parameter at a time (then, in Section~\ref{sec:control}, the formal complete bifurcation will be analyzed).

Figures~\ref{fig:bifu1_beta} shows $I_e^{(1)}$, $I_e^{(2)}$, and $I_e^{(3)}$ when $0.5\times 10^{-5} \leq \beta \leq 2.5\times 10^{-5}$  and $\phi$ is fixed at its nominal value, $\phi^{nom}$. For increasing values of $\beta$, we can identify six regions with different combination of existence and stability of the equilibria.
\begin{enumerate}
	\item For $0.5\times 10^{-5} \leq \beta \leq 1.25 \times 10^{-5}$, System~\eqref{eq:redmodel} has only the $DFE$ stable node,
	\item for $1.25\times 10^{-5} \leq \beta \leq 1.29 \times 10^{-5}$, System~\eqref{eq:redmodel} has three equilibria: the $DFE$ stable node, the $EE_1$ unstable node, and the $EE_2$ stable node ($\beta_c =1.25\times 10^{-5}$ is the critical value of $\beta$, where the backward starts, and this segment of values is part of the backward bifurcation region defined in next sections),
	\item for $1.29\times 10^{-5} \leq \beta \leq 1.33 \times 10^{-5}$, System~\eqref{eq:redmodel} has three equilibria: the $DFE$ stable node, the $EE_1$ unstable node, and the $EE_2$ stable focus (is part of the backward bifurcation region),
	\item for $1.33\times 10^{-5} \leq \beta \leq 1.89 \times 10^{-5}$, System~\eqref{eq:redmodel} has two equilibria: the $DFE$ unstable node, and the $EE_2$ stable focus,
	\item for $1.89\times 10^{-5} \leq \beta \leq 2.33 \times 10^{-5}$, System~\eqref{eq:redmodel} has two equilibria: the $DFE$ unstable node, and the $EE_2$ unstable focus, which is surrounded by a limit cycle ($\beta_c^{(1)}=1.89\times 10^{-5}$ is the first critical value of $\beta$ where the Hopf bifurcation starts, and the segment is the Hopf bifurcation region, or the \textit{bubble}, defined in the next sections),
	\item for $2.33 \times 10^{-5} \leq \beta \leq 2.5 \times 10^{-5}$, System~\eqref{eq:redmodel} has two equilibria: the $DFE$ unstable node, and the $EE_2$ stable focus, as before the Hopf bifurcation ($\beta_c^{(2)}=2.33 \times 10^{-5}$ is the second critical value of $\beta$ where the Hopf bifurcation ends). 
\end{enumerate}
\begin{figure}[H]
	\centering
	\includegraphics[width=0.75\textwidth]{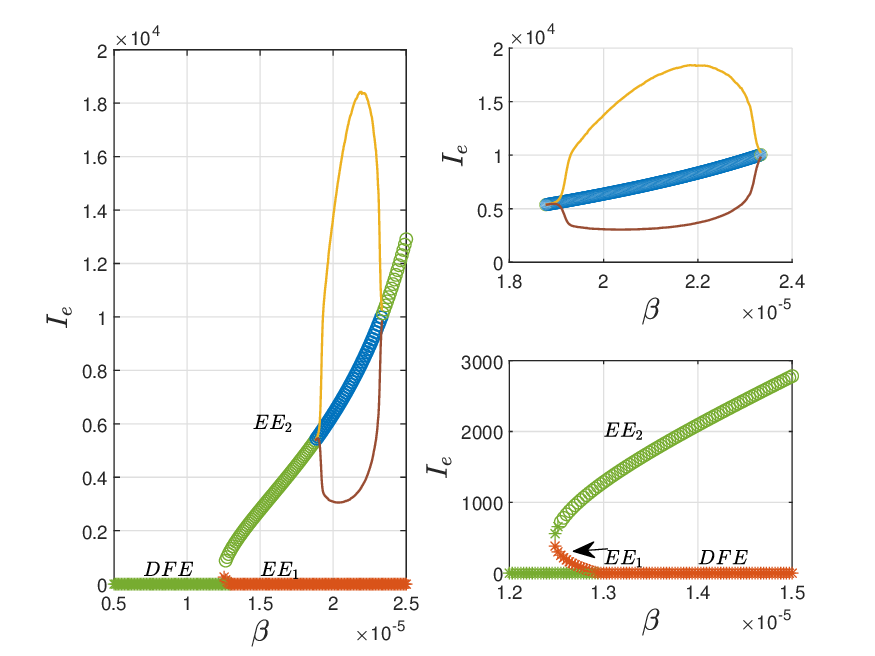}
	\caption{Left panel: prevalence bifurcation over $\beta$. Green stars: stable nodes, red stars: unstable nodes, green circles: stable focuses, blue circles: unstable focuses surrounded by a stable limit cycle. Right lower panel: backward bifurcation region. Right upper panel: Hopf bifurcation region. The yellow and violet curves show the amplitude of the limit cycle. }
	\label{fig:bifu1_beta}
\end{figure}
As it can be seen, there are two main abnormal behaviors, a backward and a Hopf bifurcation. The first one is a subcritical pitchfork bifurcation, and consists in two stable equilibria with an unstable one in between. States above the unstable $EE_1$ are attracted by the stable $EE_2$ (whether this is a stable node or a stable focus); states below the unstable equilibrium $EE_1$ are attracted by the stable $DFE$. The second bifurcation is a supercritical Hopf bifurcation, and consists in the emergence (and as $\beta$ increases, the annihilation) of unstable focuses for $EE_2$, and stable limit cycles `around' them (the states that escape from each unstable focus converge to a particular limit cycle, whose amplitude vary continuously as $\beta$ increases). Both, backward and Hopf bifurcations will be analyzed in detail in the next sections.

Figure~\ref{fig:bifu1_phi}, on the other hand, shows the $I$-component of the equilibria when $1.5 \leq  \phi \leq 4$ and $\beta$ is fixed at its nominal value $\beta^{nom}$. The equilibria behavior is qualitatively the reverse of that of Figure~\ref{fig:bifu1_beta}, and also shows six different region (as $\phi$ decreases).
\begin{enumerate}
	\item For $4 \geq \phi \geq 3.02$, System~\eqref{eq:redmodel} has only the $DFE$ stable node,
	\item for $ 3.02 \geq \phi \geq 2.98$, System~\eqref{eq:redmodel} has three equilibria: the $DFE$ stable node, the $EE_1$ unstable node, and the $EE_2$ stable node ($\phi_c=3.02$ is the critical value of $\phi$, where the backward starts, and the segment is part of the backward bifurcation region),
	\item for $2.98 \geq \phi \geq 2.86$, System~\eqref{eq:redmodel} has three equilibria: the $DFE$ stable node, the $EE_1$ unstable node, and the $EE_2$ stable focus (is part of the backward bifurcation region),
	\item for $2.86 \geq \phi \geq 1.96$, System~\eqref{eq:redmodel} has two equilibria: the $DFE$ unstable node, and the $EE_2$ stable focus,
	\item for $1.96 \geq \phi \geq 1.8$, System~\eqref{eq:redmodel} has two equilibria: the $DFE$ unstable node, and the $EE_2$ unstable focus, which is surrounded by a limit cycle ($\phi_c^{(2)}=1.96$ is the second critical value of $\phi$, where the Hopf bifurcation starts, and the segment is the Hopf bifurcation region or \textit{bubble}),
	\item for $1.8 \geq \phi \geq 1.5$, System~\eqref{eq:redmodel} has two equilibria: the $DFE$ unstable node, and the $EE_2$ stable focus, as before the Hopf bifurcation ($\phi_c^{(1)}=1.8$ is the first critical value of $\phi$, where the Hopf bifurcation ends).
\end{enumerate}
\begin{figure}[H]
	\centering
	\includegraphics[width=0.75\columnwidth]{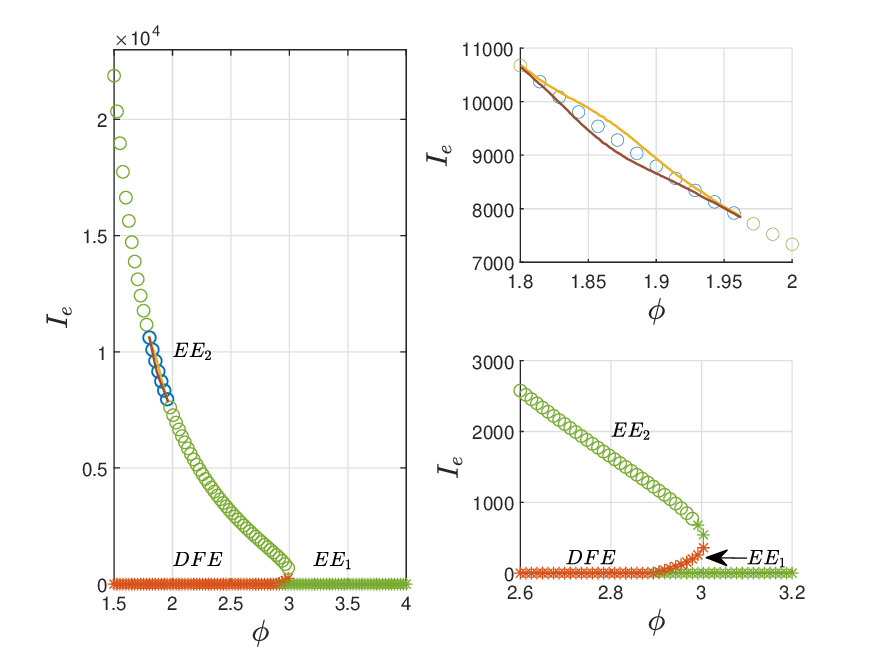}
	\caption{Left panel: prevalence bifurcation over $\phi$. Green stars: stable nodes, red stars: unstable nodes, green circles: stable focuses, blue circles: unstable focuses stable and stable limit cycles. Right lower panel: backward bifurcation region. Right upper panel: Hopf bifurcation region. The yellow and violet curves show the amplitude of the limit cycle. }
	\label{fig:bifu1_phi}
\end{figure}

\subsection{Backward bifurcation (subcritical pitchfork bifurcation)}

A backward bifurcation can be viewed as a particular case of a pitchfork bifurcation, in which a system transitions ---when a parameter crosses a critical value--- from one to three equilibria. As with Hopf bifurcations, pitchfork bifurcations can be supercritical or subcritical.

In a supercritical pitchfork bifurcation, a stable equilibrium becomes unstable and gives rise to two new stable equilibria. In epidemiological models, only non-negative equilibria are relevant, so this case corresponds to the classical forward bifurcation: when the bifurcation parameter crosses its critical value, the disease-free equilibrium loses stability and a stable endemic equilibrium emerges \cite{strogatz2018nonlinear}.

In contrast, a subcritical pitchfork bifurcation involves a single unstable equilibrium at zero becoming stable, while two unstable equilibria (one positive and one negative) emerge. A particularly important case arises when, before the critical value at which the zero equilibrium changes stability, two unstable nonzero equilibria and two additional stable nonzero equilibria appear. In epidemiology (i.e., considering only positive equilibria), this scenario is known as a backward bifurcation and significantly alters the system’s behavior. The unstable endemic equilibrium separates the basins of attraction of the DFE and the stable endemic equilibrium.

A key consequence of backward bifurcation is hysteresis \cite{strogatz2018nonlinear}: the system’s behavior depends on the direction of parameter variation. To illustrate this, consider the right lower panel of Figure~\ref{fig:bifu1_beta} (reproduced in Figure~\ref{fig:hysteresis}). Suppose the system starts at the stable DFE corresponding to $\beta < \beta_c=1.25 \times 10^{-5}$. As $\beta$ increases slowly, the system remains near the DFE, adjusting continuously\footnote{Recall that dynamical system cannot moved along an equilibrium manifold without abandon them.}, until $\beta$ reaches $1.33 \times 10^{-5}$, at which point the DFE loses its stability and the system abruptly transitions to the stable endemic equilibrium $EE_2$.

Conversely, suppose the system is initially at the stable endemic equilibrium $EE_2$ for $\beta > 1.33 \times 10^{-5}$, with a relatively high value of $I_e$ (e.g., as observed in the case study of Salta, Argentina). If $\beta$ is then decreased slowly, the system follows the branch of stable endemic equilibria until $\beta$ reaches $1.25 \times 10^{-5}$, where it abruptly transitions back to the DFE, the only stable equilibrium below this threshold.

In summary, the epidemic emerges at $\beta = 1.33 \times 10^{-5}$ (corresponding to $R_0 = 1$), but disappears only at $\beta = 1.25 \times 10^{-5}$ (where $R_0 < 1$). This asymmetry, and the abrupt changes in the prevalence, are a hallmark of backward bifurcations. Figure~\ref{fig:hysteresis} illustrates both the evolution of equilibria as $\beta$ varies (solid arrows) and the sudden jumps associated with the onset and disappearance of the epidemic (dotted arrows).

An analogous backward bifurcation behavior is observed for the parameter $\phi$, but with the roles of increasing and decreasing parameter values reversed.
\begin{figure}[H]
	\centering
	\includegraphics[width=.75\columnwidth]{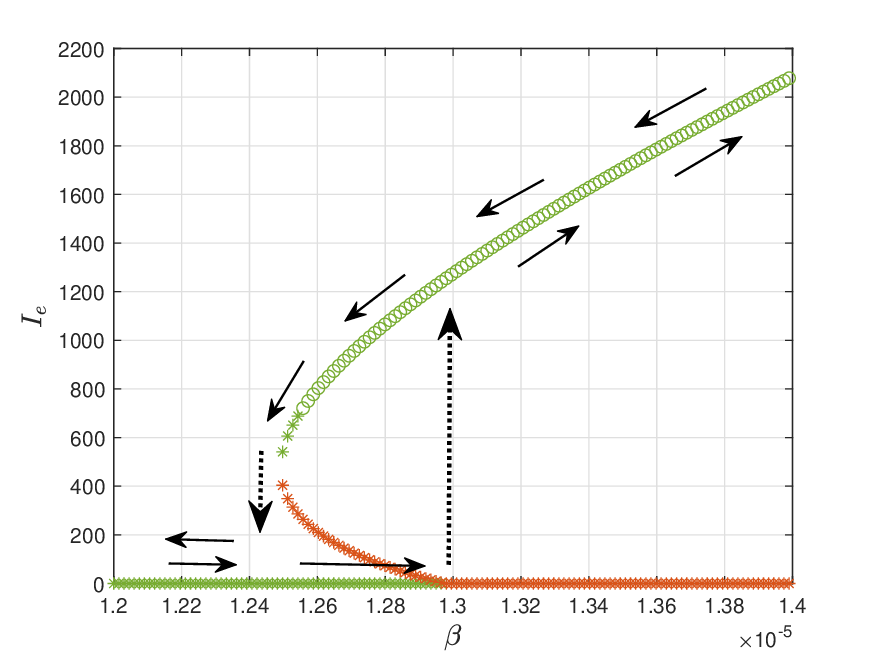}
	\caption{Hysteresis phenomenon produced by the backward bifurcation in $I_e$, when $\beta$ changes. The black arrows denotes, symbolically, the system evolution as $\beta$ increases and decreases. The black dotted arrows denotes the jumps of $I_e$ when the epidemic emerges (with $\beta$ increasing), and disappear (with $\beta$ decreasing).}
	\label{fig:hysteresis}
\end{figure}
\subsubsection{Backward bifurcation in {$\Omega$}}

We already see that for the nominal values of $\phi$ and $\beta$, the system exhibits a backward bifurcation in both, $\beta$ and $\phi$, respectively (Figures~\ref{fig:bifu1_beta}~and~\ref{fig:bifu1_phi}). We now extend this analysis to characterize all combinations of $\beta$ and $\phi$ within the parameter domain $\Omega$.

The formal criterion for a prevalence backward bifurcation is the coexistence of $DFE$, $EE_1$, and $EE_2$, with $I_e^{(1)}<I_e^{(2)}<I_e^{(3)}$, being $EE_1$ unstable and $DFE$ and $EE_2$, stable. According to the analysis made in Sections~\ref{sec:existence}~and~\ref{sec:stabilityeq}, the region in which these three equilibria coexist and fulfill the previous inequality is given by the subset $\Omega_3$ (the red region in Figure~\ref{fig:twoplots_b}). Figure~\ref{fig:bifurcations2D} illustrates the resulting BBR in $\Omega$ (yellow region).

\begin{figure}[H]
	\centering
	\includegraphics[width=.65\columnwidth]{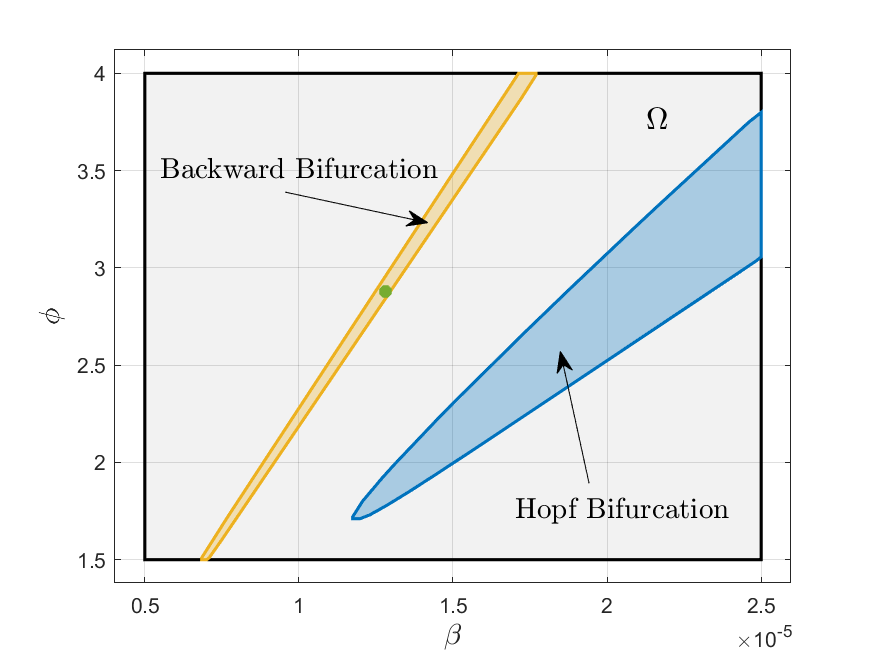}
	\caption{Backward and Hopf bifurcation regions in $\Omega$. The yellow region represents the BBR while the blue one represents the HBR. The green dot represents the nominal values of $\beta$ and $\phi$, $\beta^{nom}$ and $\phi^{nom}$.}
	\label{fig:bifurcations2D}
\end{figure}

\subsection{Supercritical Hopf bifurcation} \label{sec:hopfbif}

A Hopf (or Poincar\'e-Andronov-Hopf) bifurcation occurs when varying one or more parameters of a system causes its solutions to change from being attracted to (or repelled from) an equilibrium point to being attracted to (or repelled from) a periodic (oscillatory) solution. In particular, a supercritical Hopf bifurcation occurs when a stable focus becomes unstable at a critical parameter value, as the parameter increases or decreases. In many cases, the resulting motion is a small-amplitude limit cycle around the unstable equilibrium, whose amplitude grows as the parameter moves away from the critical value into the instability region. In many cases, also, there exists a second critical value of the parameter where the equilibrium return to be a stable focus. This phenomenon; i.e., the existence of a finite region of parameters values associated with unstable foucs and stable limit cycles, is known as bubble.

To illustrate this phenomenon, consider the upper-right panel of Figure~\ref{fig:bifu1_beta}, where only the $\beta$ bifurcation is depicted. Suppose that the system operates in the stable focus region of $EE_2$, i.e., with $\beta < \beta_c^{(1)} = 1.89 \times 10^{-5}$. In this case, any trajectory starting away from $EE_2$ converges to it via damped oscillations, since $EE_2$ is the only attracting steady state. However, as $\beta$ increases, the pair of complex conjugate eigenvalues of the Jacobian matrix evaluated at $EE_2$ crosses the imaginary axis from left to right (the third eigenvalue remains real and negative) at $\beta = \beta_c^{(1)}$. The focus then becomes unstable. Even if the system initially is at $EE_2$, any perturbation (including the variation of $\beta$ itself) causes the trajectory to move away from it, following oscillations of increasing amplitude and eventually converging to a limit cycle (whose amplitude and frequency depend on $\beta$). In fact, any initial condition leads the system to converge to a limit cycle, either from the inside or the outside, since stability has shifted from $EE_2$ to the periodic orbits.

To better show this effect in the time domain, we perform two simulations with initial conditions inside and outside the limit cycle, with $\beta = 2.1054 \times 10^{-5}$, which is a value between $\beta_c^{(1)}$ and $\beta_c^{(2)}$. The evolution of $I(t)$ and the phase portrait of the system (in the space $U$, $I$, $E$) are shown in Figures~\ref{fig:limicycle1}~and~\ref{fig:limicycle2}, respectively. Observe that the amplitude of the limit cycle (and the oscillations in $I(t)$) could be significantly large, causing the prevalence to reach unacceptable high values.

\begin{figure}[H]
	\centering
	\begin{minipage}{0.45\columnwidth}
		\centering
		\includegraphics[width=1.1\linewidth]{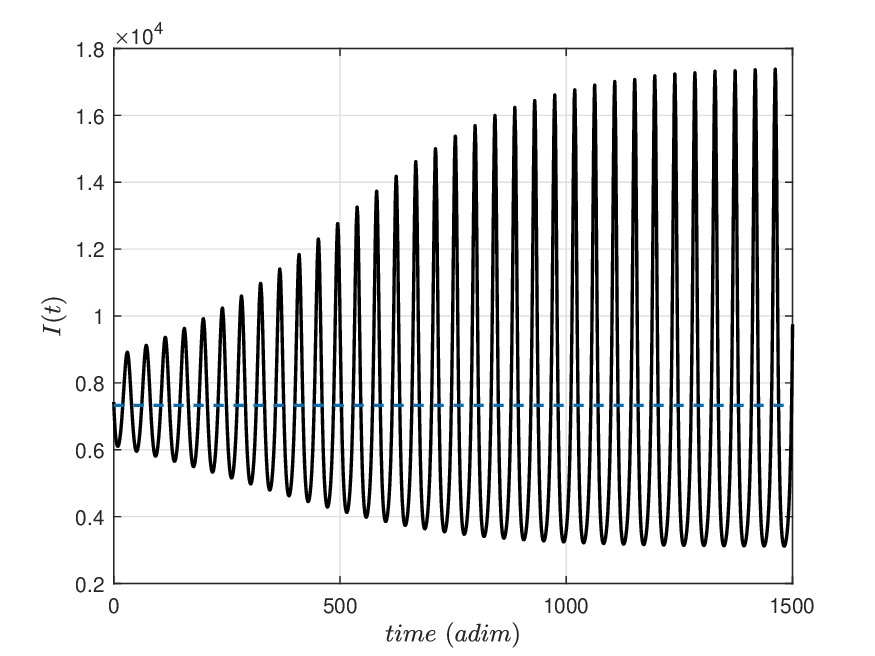}
	\end{minipage}
	\hfill
	\begin{minipage}{0.45\columnwidth}
		\centering
		\includegraphics[width=1.1\linewidth]{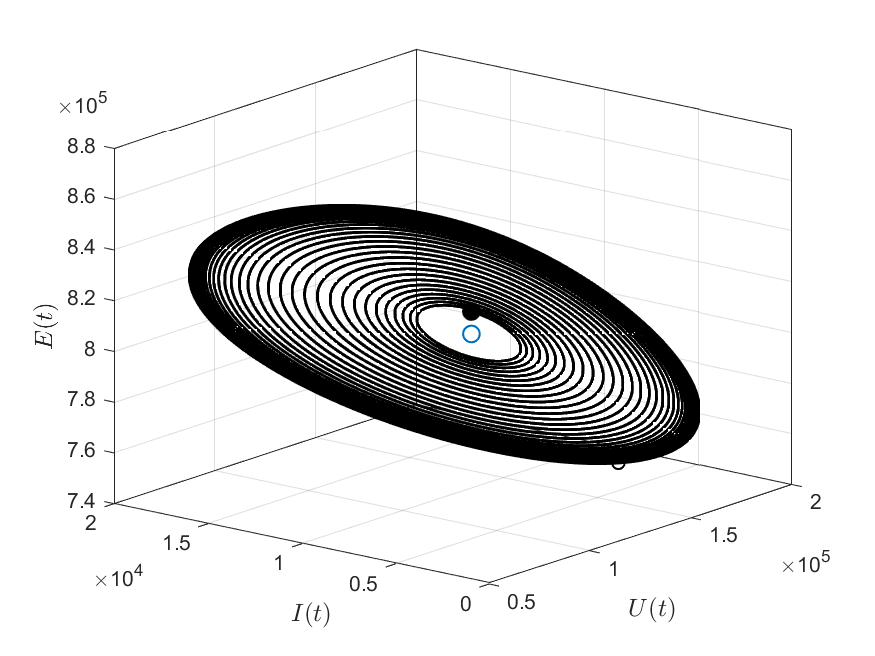}
	\end{minipage}
	\caption{Left: evolution of the prevalence $I(t)$ for $\beta=2.1054\times10^{-5}$ (black solid line) and the equilibrium value (blue dashed line). Right: phase portrait for the initial condition $x_{0}=[1.1020\times10^5;\ 7424;\ 8.1993\times10^5]$ (black filled circle), inside the limit cycle. The blue empty circle represents the equilibrium $EE_2$.}
	\label{fig:limicycle1}
\end{figure}

\begin{figure}[H]
	\centering
	\begin{minipage}{0.45\columnwidth}
		\centering
		\includegraphics[width=1.1\linewidth]{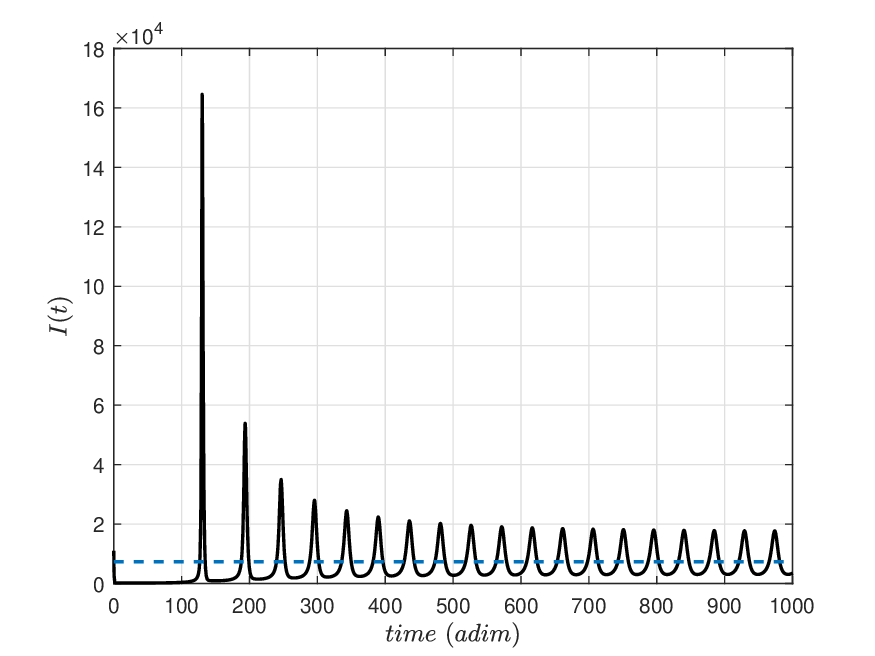}
	\end{minipage}
	\hfill
	\begin{minipage}{0.45\columnwidth}
		\centering
		\includegraphics[width=1.1\linewidth]{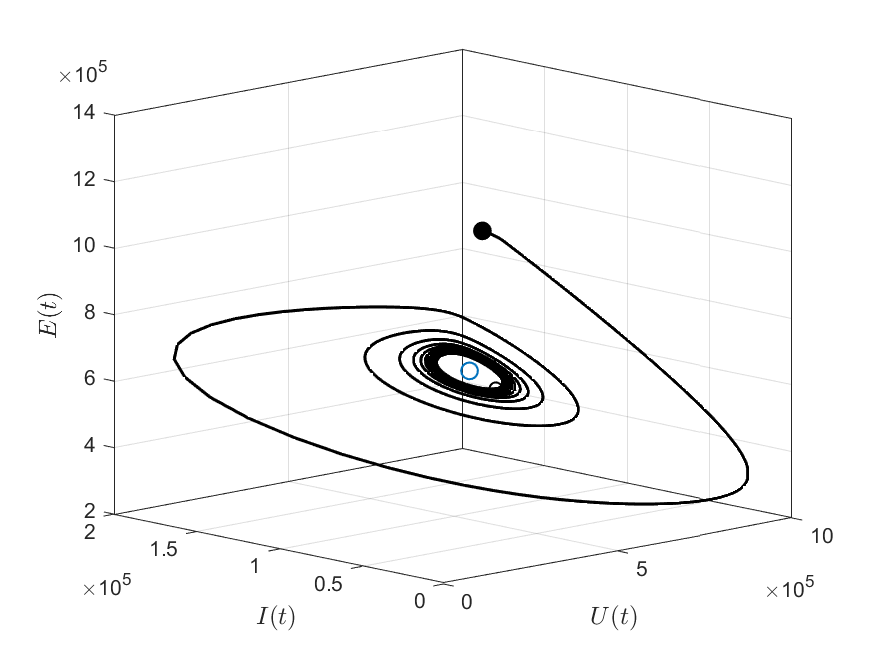}
	\end{minipage}
	\caption{Left: evolution of the prevalence $I(t)$ for $\beta=2.1054\times10^{-5}$ (black solid line) and the equilibrium value (blue dashed line). Right: phase portrait for the initial condition $x_{0}=[1.6365\times10^5;\ 11012;\ 1.2177\times10^6]$ (black filled circle), outside the limit cycle. The blue empty circle represents the equilibrium $EE_2$.}
	\label{fig:limicycle2}
\end{figure}

An analogous Hopf bifurcation (and bubble) is observed with respect to the parameter $\phi$, but with the roles of increasing and decreasing parameter values reversed (see Figure~\ref{fig:bifu1_phi}).

\subsubsection{Hopf bifurcation in $\Omega$}

We have already seen that, for the nominal values of $\phi$ and $\beta$, the system undergoes a Hopf bifurcation with respect to both parameters (see Figures~\ref{fig:bifu1_beta}~and~\ref{fig:bifu1_phi}). We now extend this analysis to characterize all combinations of $\beta$ and $\phi$ within the parameter domain $\Omega$.

To determine the Hopf bifurcation region (HBR) in $\Omega$, we analyze the Jacobian~\eqref{eq:jacob3D} evaluated at $EE_2$~\cite{rionero2019hopf}. The characteristic polynomial of the Jacobian is given by
\begin{eqnarray} \label{Hopf}
	\lambda^{3} - z_{1}\lambda^{2} + z_{2}\lambda - z_{3} = 0,
\end{eqnarray}
where
\begin{align*}
	z_{1} &= J_{11} + J_{22} + J_{33}, \\
	z_{2} &=
	\det\begin{pmatrix}
		J_{11} & J_{12} \\ 
		J_{21} & J_{22}
	\end{pmatrix}
	+
	\det\begin{pmatrix}
		J_{11} & J_{13} \\ 
		J_{31} & J_{33}
	\end{pmatrix}
	+
	\det\begin{pmatrix}
		J_{22} & J_{23} \\ 
		J_{32} & J_{33}
	\end{pmatrix}, \\
	z_{3} &= \det(J),
\end{align*}
with $J = J(EE_2, \beta, \phi, \tilde p_{r})$ being the Jacobian matrix, and $J_{11}$, $J_{12}$, $J_{21}$, and $J_{22}$ the elements of $J$. Note that the coefficients depend on the system parameters and on $EE_2$, which also depends on the parameters.

The conditions for $EE_2$ to be a focus are
\begin{eqnarray}
	z_{1}(\beta,\phi,\tilde p_{r}) &<& 0, \nonumber \\
	z_{2}(\beta,\phi,\tilde p_{r}) &>& 0, \nonumber \\
	z_{3}(\beta,\phi,\tilde p_{r}) &<& 0,
\end{eqnarray}
while the focus is stable when $z_{3} > z_{1} z_{2}$ and unstable when $z_{3} < z_{1} z_{2}$. The boundary between both regimes occurs when
\begin{eqnarray}\label{hopf condition}
	z_{1}(\beta,\phi,\tilde p_{r}) &<& 0, \nonumber \\
	z_{2}(\beta,\phi,\tilde p_{r}) &>& 0, \nonumber \\
	z_{3}(\beta,\phi,\tilde p_{r}) &=& z_{1}(\beta,\phi,\tilde p_{r})\, z_{2}(\beta,\phi,\tilde p_{r}),
\end{eqnarray}
in which case the characteristic polynomial~\eqref{Hopf} has a pair of purely imaginary roots $\pm i\sqrt{z_2}$ and one negative real root $z_1$. So, for a fixed $\tilde p_r$, condition~\eqref{hopf condition} defines the boundary of the HBR in $\Omega$. After some algebraic manipulation of the equation $z_3=z_1 z_2$ (see Appendix), we obtain the following implicit function of $\beta$ and $\phi$:
\begin{align}\label{eq:HBR}
c_1\beta^3+c_2\beta^2\phi+c_3\beta^2+c_4\beta\phi^2+c_5\beta\phi+c_6\beta+c_7\phi^2+c_8\phi+c_9=0,
\end{align}
where the coefficients $c_i(\beta,\phi,\tilde p_{r})$, $i = 1, \ldots, 9$, are given in the Appendix. The HBR is depicted in Figure~\ref{fig:bifurcations2D} (blue region), and the frequency of the limit cycles near the boundary can be approximated by $\sqrt{z_2}$.

To establish that condition~\eqref{hopf condition} yields a genuine, supercritical Hopf bifurcation ---rather than a mere sign change of $\mathrm{Re}\,\lambda$--- the two standard genericity conditions must be verified.

\begin{proposition}
Let $\tilde p_r$ be fixed and let $\gamma\subset\Omega$ be a smooth path crossing the curve~\eqref{hopf condition} at a point $(\beta_c,\phi_c)$. Then System~\eqref{eq:redmodel} undergoes a supercritical Hopf bifurcation of $EE_2$ at $(\beta_c,\phi_c)$, giving rise to a stable limit cycle.
\end{proposition}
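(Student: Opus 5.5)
The plan is to verify the hypotheses of the Andronov--Hopf theorem for a three-dimensional system along the path $\gamma$, parametrised as $s\mapsto(\beta(s),\phi(s))$ with $(\beta(0),\phi(0))=(\beta_c,\phi_c)$. These hypotheses are: a simple pair of purely imaginary eigenvalues, a nonzero crossing speed (transversality), and a negative first Lyapunov coefficient $\ell_1$ (supercriticality).

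\textbf{Step 1: the critical spectrum.} At $(\beta_c,\phi_c)$, condition~\eqref{hopf condition} factors~\eqref{Hopf} as $(\lambda-z_1)(\lambda^2+z_2)=0$. Since $z_2>0$ and $z_1<0$, the roots are $\pm i\omega_0$ with $\omega_0=\sqrt{z_2}$, plus one negative real root $z_1$. The imaginary pair is simple and no other eigenvalue lies on the imaginary axis.

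The equilibrium $EE_2$ depends smoothly on $s$ near $s=0$. This follows from the implicit function theorem applied to the cubic~\eqref{eq:Ie1_cubico}, because $I_e^{(3)}$ is a simple root there. The Hopf curve lies inside the region where $EE_2$ is the unique positive root ($\Omega_4\cup\Omega_5$ or the upper part of $\Omega_3$), away from $\Delta=0$.

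\textbf{Step 2: transversality.} Let $\alpha(s)\pm i\omega(s)$ be the continued pair. Differentiating $\lambda^3-z_1\lambda^2+z_2\lambda-z_3=0$ at $\lambda=i\omega_0$ gives the standard Routh--Hurwitz identity
\[
\alpha'(0)=\frac{\tfrac{d}{ds}\left(z_3-z_1z_2\right)}{2\left(z_1^2+z_2\right)}\Big|_{s=0}
\]
up to sign convention. So transversality holds iff $\gamma$ crosses the zero set of $H(\beta,\phi):=z_3-z_1z_2$ with $\nabla H\cdot\gamma'(0)\neq0$.

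We will check that $\nabla H\neq0$ on the curve~\eqref{eq:HBR}, so that the curve is regular. Then "crossing", understood as $\gamma$ not tangent to the curve, gives $\alpha'(0)\neq0$. We will evaluate $\nabla H$ from the polynomial form~\eqref{eq:HBR} with the nominal $\tilde p_r$, numerically along the curve in $\Omega$.

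\textbf{Step 3: supercriticality, the main obstacle.} We compute $\ell_1$ by the Kuznetsov projection formula. Translate $EE_2$ to the origin and expand the vector field~\eqref{eq:redmodel} to third order. The system is quadratic in $(U,I,E)$, so the cubic term $C\equiv0$ and only the bilinear form $B$ survives. Its entries can be read off directly: $-\beta$ in the $U$-equation from $IU$; $(p-k)\beta$, $(cq-k)\beta$ and $-k\beta$ in the $I$-equation; $-c\beta$ in the $E$-equation.

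Take $q$ with $Jq=i\omega_0q$ and $p$ with $J^\top p=-i\omega_0p$, normalised so that $\langle p,q\rangle=1$. Then
\[
\ell_1=\frac{1}{2\omega_0}\mathrm{Re}\Big[\langle p,-2B(q,J^{-1}B(q,\bar q))+B(\bar q,(2i\omega_0I-J)^{-1}B(q,q))\rangle\Big].
\]
Both inverses exist: $J$ is invertible because $z_3\neq0$, and $2i\omega_0$ is not an eigenvalue of $J$.

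Symbolically this expression is unwieldy, in the same way as the remark after~\eqref{eq:EE2unsfocus}. So we will evaluate it numerically along a fine parametrisation of the Hopf curve in $\Omega$, using the fixed nominal $\tilde p_r$, and verify $\ell_1<0$ everywhere. The difficulty is ensuring robustness: $\ell_1$ involves quantities at very different scales ($N\sim10^6$, $\beta\sim10^{-5}$). We would first rescale the states by $N$, so that $\beta N=O(10)$, and only then compute. We would also corroborate the sign with continuation (e.g.\ MatCont-style) and with the simulations in Figures~\ref{fig:limicycle1}--\ref{fig:limicycle2}, which show attracting cycles.

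\textbf{Step 4: conclusion.} With a simple imaginary pair, $\alpha'(0)\neq0$ and $\ell_1<0$, the Hopf theorem gives a unique small periodic orbit on the unstable side of the crossing, where $z_3<z_1z_2$. Its amplitude is $O(\sqrt{|s|})$. On the two-dimensional centre manifold this orbit is orbitally asymptotically stable. The third multiplier is governed by $z_1<0$, so the orbit is stable in the full three-dimensional system. It lies in the invariant set $\mathbb X$ because $EE_2$ is interior.
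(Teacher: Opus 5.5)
Your proposal follows the same route as the paper: transversality via the eigenvalue-free condition $\frac{d}{ds}(z_3-z_1z_2)\neq 0$, and supercriticality via a first Lyapunov coefficient evaluated numerically from Kuznetsov's projection formula. Where the paper cites Liu for the transversality criterion, you derive it by implicit differentiation; the correct sign is $\alpha'(0)=-\frac{d}{ds}(z_3-z_1z_2)/\bigl(2(z_1^2+z_2)\bigr)$, which is harmless since only nonvanishing matters. Your version is somewhat more careful than the paper's in two respects: you make explicit the non-tangency of $\gamma$ that the statement actually needs (the paper only varies $\beta$ or $\phi$), and you propose checking $\ell_1<0$ along the whole Hopf curve rather than only at the four bubble endpoints on the nominal slices.
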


\begin{proof} At a point satisfying~\eqref{hopf condition}, $J(EE_2)$ has a pair of purely imaginary eigenvalues $\pm i\sqrt{z_2}$ and a negative real eigenvalue $z_1$. Two genericity conditions remain. \emph{(i) Transversality.} By the eigenvalue-free criterion of~\cite{liu1994criterion}, the complex pair crosses the imaginary axis with nonzero speed if and only if $\frac{d}{d\mu}\!\left(z_3-z_1z_2\right)\neq 0$ along the path, where $\mu$ denotes the parameter being varied ($\beta$ or $\phi$); this holds throughout the boundary~\eqref{eq:HBR}. \emph{(ii) Non-degeneracy.} The first Lyapunov coefficient $\ell_1$, computed through the projection method of~\cite{kuznetsov2004elements}, is negative at the Hopf points bounding the bubbles in $\beta$ ($\beta_c\approx1.88\times10^{-5}$ and $2.33\times10^{-5}$) and in $\phi$ ($\phi_c\approx1.80$ and $1.96$). Hence the bifurcation is supercritical and the emerging limit cycle is stable. 
\end{proof}

\begin{rem}
    The existence of attractive limit cycles sheds light on some behaviors experienced in reality, where the epidemic prevalence oscillates over time. Public health authorities tend to look for an external cause to explain the oscillatory behavior, while the intrinsic dynamics of the system is indeed the only cause.
\end{rem}

\subsection{Structural emergence of backward and Hopf bifurcations}

Next, we analyze the structural conditions for the emergence of backward and Hopf bifurcations. The parameters $\beta$ and $\phi$ can be externally adjusted to influence the epidemic dynamics (they can be interpreted as manipulated variables in a control framework), while the remaining parameters are assumed to be constant. Nevertheless, these fixed parameters are typically obtained through identification procedures and may vary under slightly different conditions. Therefore, a robustness analysis of the existence of bifurcations can provide further insight into the overall system behavior.

Among the parameters in $p_r$, there exists a key parameter that drives the emergence of backward and Hopf bifurcations when $\beta$ and/or $\phi$ are varied. In particular, the reinfection rate coefficient associated with the latent class, $k$, plays a crucial role in shaping the system dynamics, as it introduces an additional pathway to the infectious class $I$ through contagion. Figure~\ref{fig:Sen_k} presents the $\beta$-bifurcation diagram (with $\phi$ fixed at its nominal value) and the $\phi$-bifurcation diagram (with $\beta$ fixed at its nominal value) for different values of $k$. 

As shown, there exists a range of values of $k$, including the nominal value obtained from the identification process ($k=0.26$), for which the system exhibits nonstandard behavior, namely the coexistence of three equilibria and the emergence of stable limit cycles. As $k$ increases from $0.15$ to $0.2$, the system transitions from a forward to a backward bifurcation, which is consistent with classical results in \cite{feng2000model,castillo2004dynamical}. More interestingly, for $k \geq 0.25$, the system not only exhibits backward bifurcations but also supercritical Hopf bifurcations (also reported in some works \cite{bonyah2020hopf,das2025exploring}, but in the context of delayed treatments). 

For large values of $\beta$, or equivalently small values of $\phi$, the endemic equilibrium may lose stability (transitioning from a stable focus to an unstable focus and then back to stable), leading to the emergence of stable limit cycles with large amplitude. This phenomenon—namely, the appearance and subsequent disappearance of limit cycles—is referred to as a \textit{bubble} in recent works (see, e.g., \cite{das2024exploring}), and constitutes an interesting behavior from both modeling and control design perspectives.

According to \cite{wangari2018backward}, the parameter $k$ takes a value of $k=0.26$ (our nominal value), which lies within the range that leads to the emergence of both backward and Hopf bifurcations and motivates the previous bifurcation analysis.
\begin{figure}[H]
\centering
\begin{minipage}{0.4\columnwidth}
    \centering
    \includegraphics[width=1.1\linewidth]{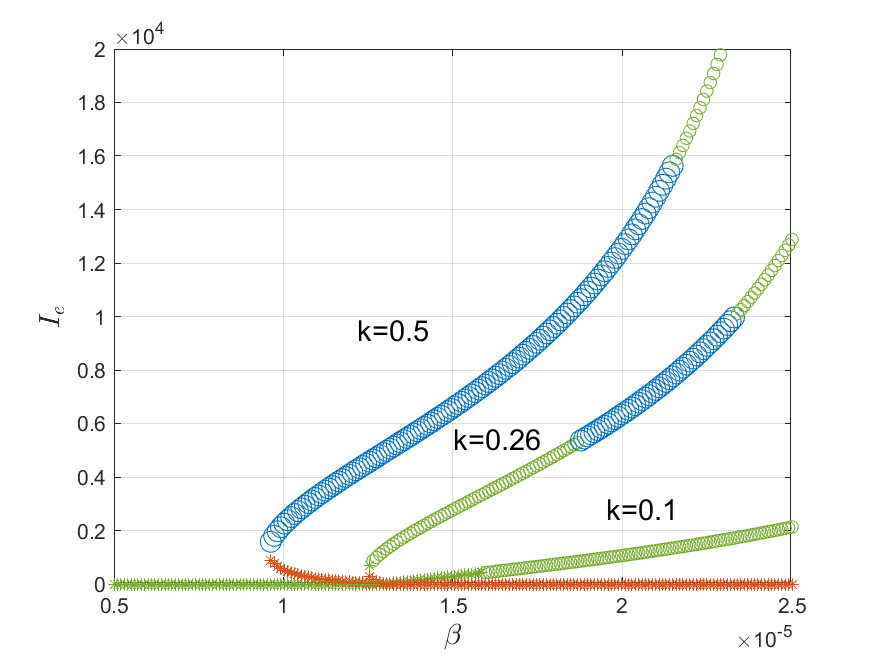}
\end{minipage}
\hfill
\begin{minipage}{0.4\columnwidth}
    \centering
    \includegraphics[width=1.1\linewidth]{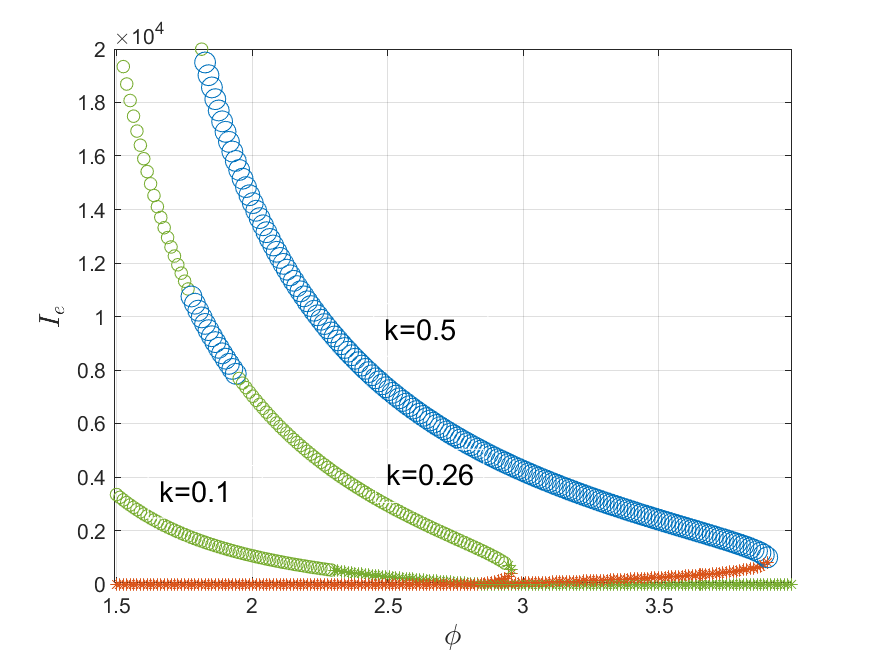}
\end{minipage}
        \caption{$\beta$-Bifurcation (with $\phi$ taking the nominal value) and $\phi$-Bifurcation (with $\beta$ taking the nominal value) diagrams for System~\eqref{eq:model} considering different values of $k$. The nominal value is $k=0.26$. The curves shows the existence of equilibria and their stability.}
        \label{fig:Sen_k}
\end{figure}

\section{Interventions strategies: some preliminary intuitions}\label{sec:control}

Building on the bifurcation structure characterized in the previous section, we now discuss preliminary control strategies that exploit the geometry of the parameter space $\Omega$. Figure~\ref{fig:bifu_Omega} presents the prevalence bifurcations, along with a zoomed view of the region around the backward bifurcation, for clarity. Bifurcations on the other state's equilibrium components ($U_e$ and $E_e$) are equivalents, since they are explicit functions of $I_e$.
\begin{figure}[H]
	\centering
	\includegraphics[width=1.1\columnwidth]{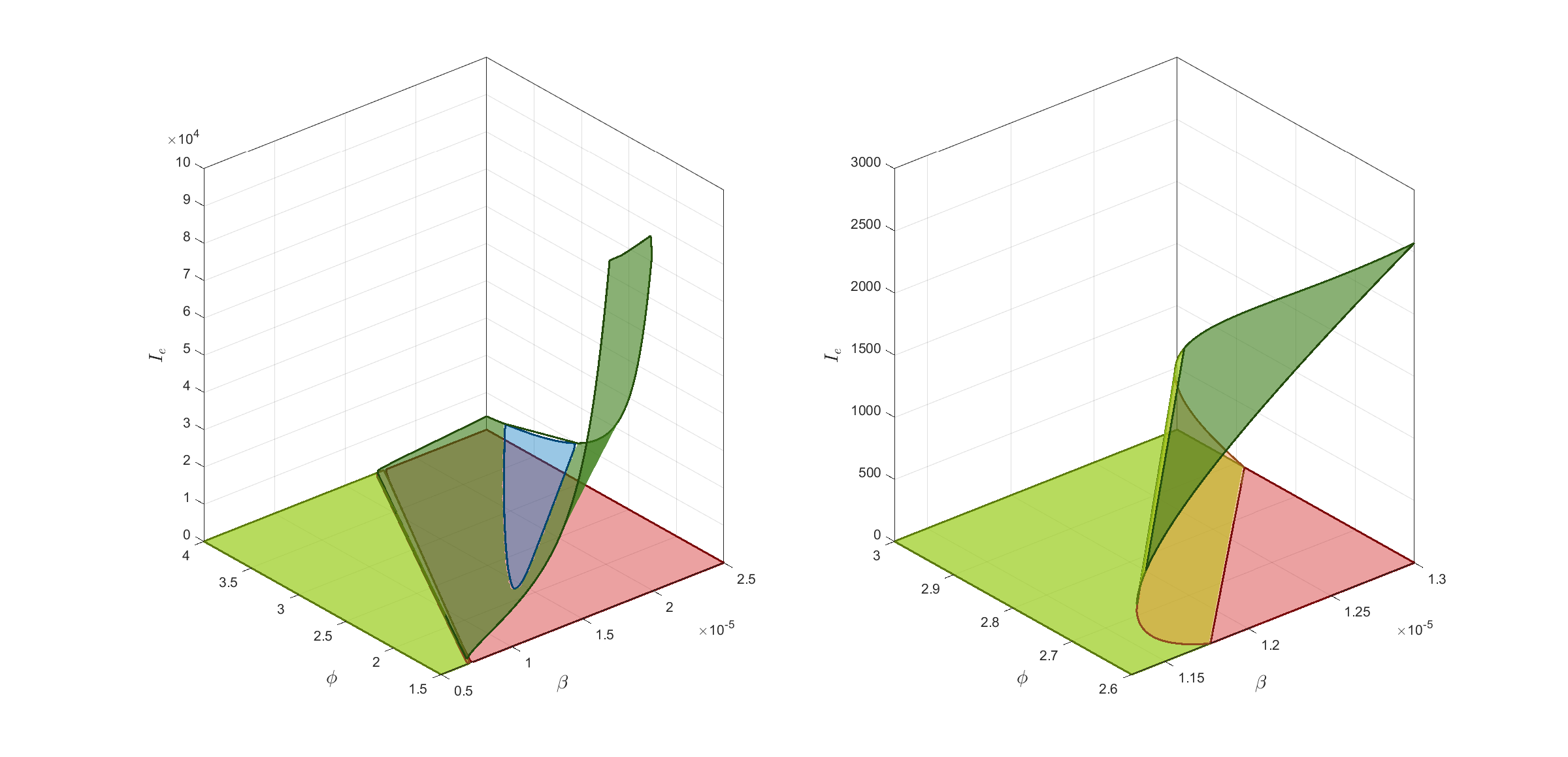}
	\caption{Left: prevalence bifurcation in $\Omega$. The light-green region corresponds to stable nodes, the yellow region represents the $BBR$, the dark-green region corresponds to a stable focus, and the blue region represents the $HBR$. Right: zoomed view in the vicinity of the $BBR$. Figures~\ref{fig:bifu1_beta}~and~\ref{fig:bifu1_phi} correspond to slices of this complete plot.}
	\label{fig:bifu_Omega}
\end{figure}

The general dynamical and bifurcation analysis suggest that finding the proper combination of 'case finding' and 'case holding' control actions (i.e., modifications of $\beta$ and $\phi$) to reduce the prevalence to undetectable levels is far from trivial. In this first approach, we do not go into the details of an optimization-based control strategy, but of intuitive strategies exploiting the geometry of bifurcations. 

As in many studies, we can plot $R_0$ as a function of $\beta$ and $\phi$, as shown in Figure~\ref{fig:R0_beta_phi}, with the objective of simply reducing it. However, this plot does not indicate the best way to reduce $I$ through a proper combination of decreasing $\beta$ and increasing $\phi$, since bringing $R_0$ below $1$ does not necessarily imply the end of the epidemic and the Hopf bifurcation is not transparently related to $R_0$.
\begin{figure}[H]
    \centering
	\includegraphics[width=.75\columnwidth]{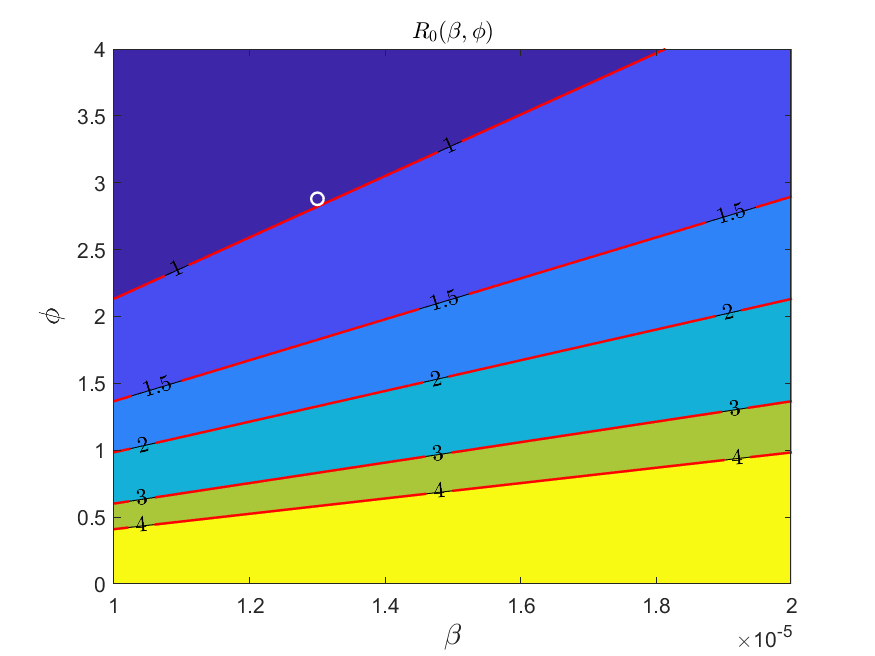}
        \caption{$R_0$ as a function of $\beta$ and $\phi$, for System~\eqref{eq:model} The white dot represents $R_0$ the nominal values of $\beta$ and $\phi$ ($R_0=0.9647$).}
        \label{fig:R0_beta_phi}
\end{figure}

A more informative assessment can be obtained by analyzing Figures~\ref{fig:bifurcations2D}~and~\ref{fig:bifu_Omega}, and design control interventions to avoid undesirable behaviors. 

At first glance, it becomes clear that allocating effort exclusively to increasing $\phi$ without reducing $\beta$ (see the evolution of $I_e$ for $\beta = 2\times 10^{-5}$), or vice versa (see the evolution of $I_e$ for $\phi = 2$), is not optimal for avoiding limit cycles (and the resulting oscillations and increase in $I$). This implies that focusing solely on either case finding or case holding is insufficient to steer the system toward a low $I_e$ without first experiencing a significant rise.

A more detailed inspection of the 3D plot suggests that one can design strategies to "avoid" the limit cycle region and approach the line defining the drop of $I_e$ below detectable levels (given by the upper boundary of the BBR, shown in yellow in Figure~\ref{fig:bifurcations2D}) perpendicularly. For example, when starting with a high $\beta$ and a middle/high $\phi$ (e.g., $\beta=2.5\times10^{-5}$ and $\phi=2.5$), it is preferable to initially reduce both $\phi$ and $\beta$ (even though reducing $\phi$ may seem counterintuitive), thereby skirting the HBR (blue region in Figure~\ref{fig:bifurcations2D}). Subsequently, one should increase $\phi$ and further decrease $\beta$ to approach the drop line of $I_e$ as quickly as possible.

Conversely, when starting with low $\beta$ and low $\phi$ (e.g., $\beta=1\times10^{-5}$ and $\phi=1.5$), the best strategy is to move perpendicularly toward the drop line of $I_e$. Starting from high $\beta$ and low $\phi$ (the worst-case scenario) can be viewed as a particular instance of the first case. The 3D bifurcation plot suggests that the most effective intervention is to first reduce $\beta$ and then increase $\phi$ to circumvent the HBR, before proceeding directly toward the drop line of $I_e$.

If the system is already in a limit cycle, the best policy is to quickly steer it out of the blue region; that is, to find a combination of $\beta$ and $\phi$ that moves the system perpendicularly to the upper boundary of the HBR.

Overall, the analysis suggests that both timing and combination of interventions are crucial. Therefore, an optimization-based control strategy should explicitly incorporate the system’s bifurcation structure: simply minimizing $I$ (or even $I+L$) subject to box-type constraints on case finding and case holding actions is not sufficient.

\begin{figure}[H]
	\hspace*{-1.6cm}\includegraphics[width=1.2\textwidth]{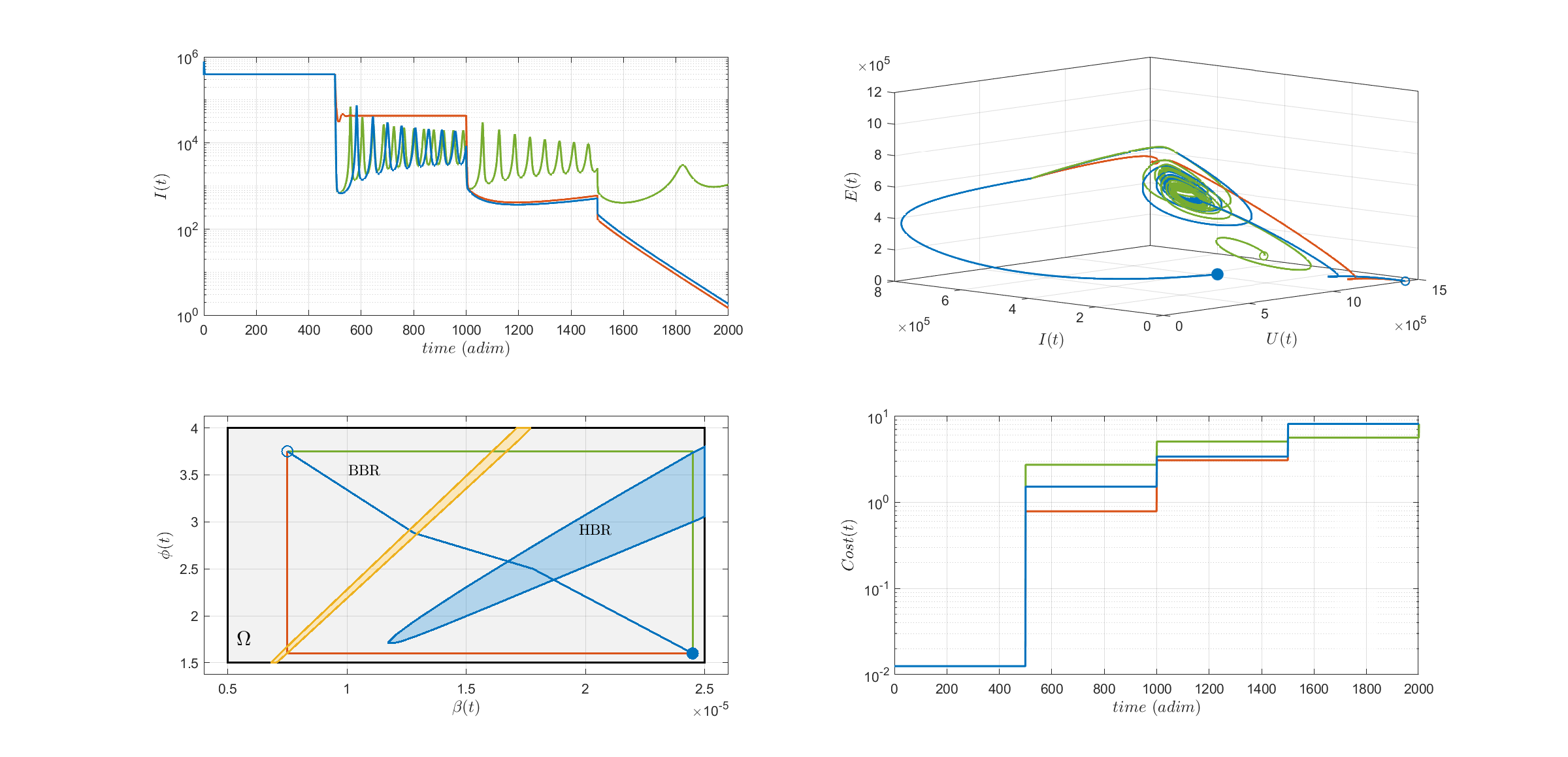}
	\caption{Three simple control strategies to reach undetectable prevalence value were tested. Control 1 (green lines) increases first $\phi$ and then decrease $\beta$, control 2 (blue lines) decreases $\beta$ and increases $\phi$ simultaneously, and control 3 (red line) decrease first $\beta$ and then increases $\phi$. Left upper panel: time evolution of infected $I(t)$. Right upper panel: phase portrait. Left lower panel: control path in $\Omega$. Right lower panel: time evolution of the cost function $Cost(t)$.}
	\label{fig:controlTB2}
\end{figure}

These preliminary results suggest that an optimization-based control strategy should explicitly incorporate the bifurcation structure of the system, rather than relying solely on $R_0$-based criteria.

\section{Conclusions}

In this work, a model for tuberculosis epidemics was introduced and characterized. In agreement with the specialized literature, it includes reinfections from treated and latent individuals, leading to the emergence of subcritical pitchfork (backward) and supercritical Hopf (limit cycle emergence) bifurcations. Notably, the Hopf bifurcation is shown to emerge without delays or saturation effects, which distinguishes this work from much of the existing literature. A multivariate bifurcation analysis ---rather than a classical $R_0$-based approach--- was conducted by focusing on parameters directly associated with case finding and case holding interventions, which are the primary tools for managing this type of epidemic at the healthcare level.
Data from Salta, a province of Argentina, were selected as a case study; the qualitative results extend naturally to other high-burden settings beyond Salta.

Based on an exhaustive dynamical analysis, preliminary control strategies were envisioned to avoid undesirable outcomes in healthcare interventions. More precisely, the model characterization and, particularly, the bifurcation analysis, suggest that such dynamical information should be explicitly incorporated into optimization-based control strategies to prevent counterproductive - or even detrimental - effects. Future works include developing a formal procedure to find optimal policies (i.e., specifying the time-dependent allocation of efforts between $\beta$ and $\phi$) through a formal optimal control problem. Additionally, the model structure could be refined ---potentially incorporating additional compartments and reinfection pathways--- and parameter estimation methods could be further improved.

\section{Bibliography}


\begin{thebibliography}{99}
\setlength{\itemsep}{0pt}\setlength{\parsep}{0pt}
\bibitem{BoletinN7TBArg}
N.~Alvarez, L.~Angueira, P.~Barletta, M.~Chernomoretz, S.~Ioannoy,
  J.~Linares~Pessacq, C.~Roncoroni, S.~Arias, G.~Armando, J.~C. Bossio,
  R.~Colombini, L.~Corti, R.~Dalla~Fontana, Y.~Díaz, H.~Fernández, L.~Ghio,
  R.~López, A.~Mancinelli, N.~Mordini, and I.~Vanni, ``Boletín sobre
  tuberculosis y lepra en la {A}rgentina,'' no.~7, pp. 1--102, 2024.

\bibitem{blower1996control}
S.~Blower, P.~Small, and P.~Hopewell, ``Control strategies for tuberculosis
  epidemics: new models for old problems,'' \emph{Science}, vol. 273, no. 5274,
  pp. 497--500, 1996.

\bibitem{bonyah2020hopf}
E.~Bonyah, F.~Al~Basir, and S.~Ray, ``Hopf bifurcation in a mathematical model
  of tuberculosis with delay,'' in \emph{Mathematical Modelling, Optimization,
  Analytic and Numerical Solutions}.\hskip 1em plus 0.5em minus 0.4em\relax
  Springer, 2020, pp. 301--311.

\bibitem{brauer2019mathematical}
F.~Brauer, C.~Castillo-Chavez, and Z.~Feng, \emph{Mathematical models in
  epidemiology}.\hskip 1em plus 0.5em minus 0.4em\relax Springer, 2019,
  vol.~32.

\bibitem{castillo2004dynamical}
C.~Castillo-Chavez and B.~Song, ``Dynamical models of tuberculosis and their
  applications,'' \emph{Mathematical Biosciences \& Engineering}, vol.~1,
  no.~2, p. 361, 2004.

\bibitem{chiang2005exogenous}
C.-Y. Chiang and L.~W. Riley, ``Exogenous reinfection in tuberculosis,''
  \emph{The Lancet infectious diseases}, vol.~5, no.~10, pp. 629--636, 2005.

\bibitem{das2025exploring}
S.~Das, T.~Sarkar, and P.~Biswas, ``Exploring the stability of equilibria and
  hopf-bifurcation in a tuberculosis model with delayed treatment,''
  \emph{Physica Scripta}, vol. 100, no.~1, p. 015271, 2025.

\bibitem{das2024exploring}
S.~Das, P.~K. Srivastava, and P.~Biswas, ``Exploring hopf-bifurcations and
  endemic bubbles in a tuberculosis model with behavioral changes and treatment
  saturation,'' \emph{Chaos: An Interdisciplinary Journal of Nonlinear
  Science}, vol.~34, no.~1, p. 013126, 2024.

\bibitem{deiss2009tuberculosis}
R.~G. Deiss, T.~C. Rodwell, and R.~S. Garfein, ``Tuberculosis and illicit drug
  use: review and update,'' \emph{Clinical Infectious Diseases}, vol.~48,
  no.~1, pp. 72--82, 2009.

\bibitem{diekmann2010construction}
O.~Diekmann, J.~A.~P. Heesterbeek, and M.~G. Roberts, ``The construction of
  next-generation matrices for compartmental epidemic models,'' \emph{Journal
  of the royal society interface}, vol.~7, no.~47, pp. 873--885, 2010.

\bibitem{feng2000model}
Z.~Feng, C.~Castillo-Chavez, and A.~F. Capurro, ``A model for tuberculosis with
  exogenous reinfection,'' \emph{Theoretical population biology}, vol.~57,
  no.~3, pp. 235--247, 2000.

\bibitem{gao2018optimal}
D.~Gao and N.-j. Huang, ``Optimal control analysis of a tuberculosis model,''
  \emph{Applied Mathematical Modelling}, vol.~58, pp. 47--64, 2018.

\bibitem{gelman2013bayesian}
A.~Gelman, J.~B. Carlin, H.~S. Stern, D.~B. Dunson, A.~Vehtari, and D.~B.
  Rubin, \emph{Bayesian Data Analysis}, 3rd~ed.\hskip 1em plus 0.5em minus
  0.4em\relax Boca Raton, Florida: CRC, 2013.

\bibitem{gerberry2016practical}
D.~J. Gerberry, ``Practical aspects of backward bifurcation in a mathematical
  model for tuberculosis,'' \emph{Journal of theoretical biology}, vol. 388,
  pp. 15--36, 2016.

\bibitem{Hastings1970}
W.~K. Hastings, ``Monte carlo sampling methods using markov chains and their
  applications,'' \emph{Biometrika}, vol.~57, no.~1, pp. 97--109, 04 1970.

\bibitem{houben2016global}
R.~M. Houben and P.~J. Dodd, ``The global burden of latent tuberculosis
  infection: a re-estimation using mathematical modelling,'' \emph{PLoS
  medicine}, vol.~13, no.~10, p. e1002152, 2016.

\bibitem{kermack1927}
W.~O. Kermack and A.~G. McKendrick, ``A contribution to the mathematical theory
  of epidemics,'' \emph{Proceedings of the royal society of london. Series A,
  Containing papers of a mathematical and physical character}, vol. 115, no.
  772, pp. 700--721, 1927.

\bibitem{kuznetsov2004elements}
Y.~A. Kuznetsov, \emph{Elements of Applied Bifurcation Theory}, 3rd~ed., ser.
  Applied Mathematical Sciences.\hskip 1em plus 0.5em minus 0.4em\relax New
  York: Springer, 2004, vol. 112.

\bibitem{Lawn2011}
S.~D. Lawn and A.~I. Zumla, ``Tuberculosis,'' \emph{The Lancet}, vol. 378, pp.
  57--72, 2011.

\bibitem{liu2011global}
J.~Liu and T.~Zhang, ``Global stability for a tuberculosis model,''
  \emph{Mathematical and Computer Modelling}, vol.~54, no. 1-2, pp. 836--845,
  2011.

\bibitem{liu1994criterion}
W.~M. Liu, ``Criterion of {H}opf bifurcations without using eigenvalues,''
  \emph{Journal of Mathematical Analysis and Applications}, vol. 182, no.~1,
  pp. 250--256, 1994.

\bibitem{lonnroth2010consistent}
K.~L{\"o}nnroth, B.~G. Williams, P.~Cegielski, and C.~Dye, ``A consistent
  log-linear relationship between tuberculosis incidence and body mass index,''
  \emph{International journal of epidemiology}, vol.~39, no.~1, pp. 149--155,
  2010.

\bibitem{ma2018quantifying}
Y.~Ma, C.~Horsburgh, L.~White, and H.~Jenkings, ``Quantifying {TB}
  transmission: a systematic review of reproduction number and serial interval
  estimates for tuberculosis.'' \emph{Epidemiology and Infection}, vol. 146,
  pp. 1478--1494, 2018.

\bibitem{paicanadian}
M.~Pai, J.~Minion, F.~Jamieson, J.~Wolfe, and A.~M. Behr, ``Canadian
  tuberculosis standards 7th edition.''

\bibitem{Peter2025model}
O.~Peter, D.~Aldila, T.~Ayoola, G.~Balogun, and F.~A. Oguntolu, ``Modeling
  tuberculosis dynamics with vaccination and treatment strategies,''
  \emph{Scientific African}, p. e02647, 2025.

\bibitem{rehm2009association}
J.~Rehm, A.~V. Samokhvalov, M.~G. Neuman, R.~Room, C.~Parry, K.~L{\"o}nnroth,
  J.~Patra, V.~Poznyak, and S.~Popova, ``The association between alcohol use,
  alcohol use disorders and tuberculosis (tb). a systematic review,'' \emph{BMC
  public health}, vol.~9, no.~1, pp. 1--12, 2009.

\bibitem{rionero2019hopf}
S.~Rionero, ``Hopf bifurcations in dynamical systems,'' \emph{Ricerche di
  Matematica}, vol.~68, no.~2, pp. 811--840, 2019.

\bibitem{silva2016optimal}
C.~J. Silva, H.~Maurer, and D.~F. Torres, ``Optimal control of a tuberculosis
  model with state and control delays,'' \emph{arXiv preprint
  arXiv:1606.08721}, 2016.

\bibitem{silva2015optimal}
C.~J. Silva and D.~F. Torres, ``Optimal control of tuberculosis: a review,''
  \emph{Dynamics, games and science}, pp. 701--722, 2015.

\bibitem{strogatz2018nonlinear}
S.~H. Strogatz, \emph{Nonlinear Dynamics and Chaos: With Applications to
  Physics, Biology, Chemistry, and Engineering}, 2nd~ed.\hskip 1em plus 0.5em
  minus 0.4em\relax Boca Raton: CRC Press, 2018.

\bibitem{vesga2022prioritising}
J.~F. Vesga, C.~Lienhardt, P.~Nsengiyumva, J.~R. Campbell, O.~Oxlade, S.~den
  Boon, D.~Falzon, K.~Schwartzman, G.~Churchyard, and N.~Arinaminpathy,
  ``Prioritising attributes for tuberculosis preventive treatment regimens: a
  modelling analysis,'' \emph{BMC medicine}, vol.~20, no.~1, pp. 1--12, 2022.

\bibitem{walzl2011immunological}
G.~Walzl, K.~Ronacher, W.~Hanekom, T.~J. Scriba, and A.~Zumla, ``Immunological
  biomarkers of tuberculosis,'' \emph{Nature Reviews Immunology}, vol.~11,
  no.~5, pp. 343--354, 2011.

\bibitem{wangari2018backward}
I.~M. Wangari and L.~Stone, ``Backward bifurcation and hysteresis in models of
  recurrent tuberculosis,'' \emph{PloS one}, vol.~13, no.~3, p. e0194256, 2018.

\bibitem{world2015end}
WHO \emph{et~al.}, ``The end {TB} strategy,'' World Health Organization, Tech.
  Rep., 2015.

\bibitem{WHOTuberculosis_DrugResistance}
------, ``{Drug Resistance: tuberculosis},''
  https://www.who.int/drugresistance/tb/en, 2021.

\bibitem{WHOTuberculosis2024}
------, ``{Global tuberculosis report 2024. World Health Organization},''
  https://www.who.int/teams/global-tuberculosis-programme/tb-reports/global-tuberculosis-report-2024,
  2024, {Accessed}: 2025-24-02.

\bibitem{WHOTuberculosis2025}
------, ``{Global tuberculosis report 2025. World Health Organization},''
  https://www.who.int/teams/global-programme-on-tuberculosis-and-lung-health/tb-reports/global-tuberculosis-report-2025,
  2024, {Accessed}: 2025-24-02.

\bibitem{zhang2023dynamical}
Z.~Zhang, W.~Zhang, K.~S. Nisar, N.~Gul, A.~Zeb, and V.~Vijayakumar,
  ``Dynamical aspects of a tuberculosis transmission model incorporating
  vaccination and time delay,'' \emph{Alexandria Engineering Journal}, vol.~66,
  pp. 287--300, 2023.





\end{thebibliography}
\renewcommand{\refname}{}
\begingroup
\setlength{\itemsep}{0pt}\setlength{\parskip}{0pt}

\endgroup
\end{document}